\theoremstyle{definition}
\newtheorem{definition}{Definition}
\newtheorem{remark}{Remark}
\theoremstyle{plain}
\newtheorem{theorem}{Theorem}
\newtheorem{lemma}[definition]{Lemma}
\newtheorem{corollary}{Corollary}
\theoremstyle{remark}
\newcommand{\C}{\mathbb{C}}
\renewcommand{\H}{\mathbb{H}}
\newcommand{\K}{\mathbb{K}}
\newcommand{\N}{\mathbb{N}}
\newcommand{\Q}{\mathbb{Q}}
\newcommand{\R}{\mathbb{R}}
\newcommand{\Z}{\mathbb{Z}}
\newcommand{\Ccal}{\mathcal{C}}
\newcommand{\Hcal}{\mathcal{H}}
\newcommand{\Ical}{\mathcal{I}}
\newcommand{\Mcal}{\mathcal{M}}
\newcommand{\Tcal}{\mathcal{T}}
\newcommand{\trace}{\operatorname{trace}\,}
\newcommand{\diag}{\operatorname{diag}\,}
\newcommand{\oh}{\mathcal{\scriptstyle{O}}}
\let\leq\leqslant
\let\geq\geqslant
\begin{document}

\begin{center}
\begin{huge}
\begin{spacing}{1.0}
\textbf{On Hecke theory for Hermitian modular forms}  
\end{spacing}
\end{huge}

\bigskip
by
\bigskip

\begin{large}
\textbf{Adrian Hauffe-Waschbüsch} and \textbf{Aloys Krieg}\footnote{Adrian Hauffe-Waschbüsch, Aloys Krieg\\ Lehrstuhl A für Mathematik, RWTH Aachen University, D-52056 Aachen, Germany \\ adrian.hauffe@matha.rwth-aachen.de, krieg@rwth-aachen.de}\\
\bigskip\bigskip\bigskip
\emph{Dedicated to Murugesan Manickam on the occasion of his 60th birthday.}
\end{large}
\vspace{2cm}
\end{center}
\noindent\textbf{Abstract.}
In this paper we outline the Hecke theory for Hermitian modular forms in the sense of Hel Braun for arbitrary class number of the attached imaginary-quadratic number field. The Hecke algebra turns out to be commutative. Its inert part has a structure analogous to the case of the Siegel modular group and coincides with the tensor product of its $p$-components for inert primes $p$. This leads to a characterization of the associated Siegel-Eisenstein series. The proof also involves Hecke theory for particular congruence subgroups.
\medskip

\noindent\textbf{Keywords.} Hecke algebra, Hermitian modular group, cusp forms, Siegel-Eisenstein series
\vspace{2ex}\\
\noindent\textbf{Mathematics Subject Classification.} 11F55, 11F60

\newpage

\section{Introduction}

The Hermitian modular group associated with an imaginary-quadratic number field $\K$ was introduced by H. Braun \cite{B}, \cite{B2} as an analogue of the Siegel modular group. The case of class number $>1$ leads to number theoretical complications. If one wants to consider the Hecke theory as for instance by Freitag \cite{F}, there are only a few concrete results (cf. \cite{En}, \cite{K6}). Most authors consider the situation over local fields (cf. \cite{Rm}). 

In this paper we show that each double coset contains a matrix in block diagonal form. Hence the Hecke algebra is commutative. Moreover we characterize a particular subalgebra of the Hecke algebra, which is related to inert primes. As a consequence we obtain a characterization of the Siegel-Eisenstein series, which was available up to now only in the case of class number $1$ (cf. \cite{K4}). Many of our results are similar to the investigations by M. Manickam \cite{Ma1} on Jacobi forms.

\section{The Hecke algebra for the Hermitian modular group}

Throughout the paper let 
\[
 \K = \Q(\sqrt{-m}) \subset \C,\;\, m\in\N\;\;\text{squarefree},
\]
be an imaginary-quadratic number field. Its discriminant and ring of integers are
\[
d_\K = \begin{cases}
           -m \\
           -4m
         \end{cases}\;\;
\text{and}\;\; \oh_\K = \Z+\Z\omega_\K = \begin{cases}
                                         \Z+\Z(1+\sqrt{-m})/2 & \text{if}\;\,m\equiv 3\,(\bmod 4), \\
                                         \Z+\Z\sqrt{-m} & \text{if} \;\, m\equiv 1,2 \,(\bmod 4).
                                        \end{cases}
\]
Denote its class number by $h_\K$ and the associated primitive real Dirichlet character $\mod |d_\K|$ by $\chi_\K$.

Define the set of integral unitary similitudes of factor $q\in\N$ by
\[
 \Delta_n(q):= \{M\in\oh_\K^{2n\times 2n};\; J[M]:= \overline{M}^{tr} JM=qJ\},\;\;J=\begin{pmatrix}
                                                                                     0 & -I \\ I & 0
                                                                                    \end{pmatrix}, \;
I=\begin{pmatrix}
   1 & & 0 \\  & \ddots & \\ 0 & & 1
  \end{pmatrix}.
\]
Moreover let 
\[
 \Delta_{n,q}:=\bigcup_{\ell=0}^{\infty} \Delta_n(q^\ell), \;\;\Delta_n=\bigcup_{q\in\N} \Delta_n(q).
\]
\[
\Gamma_n:= \Delta_n(1) \subseteq U(n,n;\C):=\{M\in\C^{2n\times 2n};\; J[M]=J\}
\]
denotes the \emph{Hermitian modular group of degree $n$}. Given $q\in \N$ let 
\[
\Gamma_n[q]=\{M\in \Gamma_n;\;M\equiv I (\bmod q)\}
\]
stand for the \emph{principal congruence subgroup} of level $q$. We will always assume a block decomposition
\[
 M= \begin{pmatrix}
     A & B \\ C & D
    \end{pmatrix} \in \Delta_n,\;\, A,B,C,D\in \oh_\K^{n\times n}.
\]

\begin{lemma}\label{Lemma_1}
Given $M\in\Delta_n(q)$ then 
\[
 \sharp(\Gamma_n\backslash \Gamma_n M\Gamma_n) < \infty.
\]
\end{lemma}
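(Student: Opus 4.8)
The plan is to prove the stronger statement that the whole set $\Delta_n(q)$ splits into finitely many left cosets modulo $\Gamma_n$. This suffices, because for $\gamma_1,\gamma_2\in\Gamma_n$ the identity
\[
 J[\gamma_1 M\gamma_2]=\overline{\gamma_2}^{tr}\,\overline{M}^{tr}\,(\overline{\gamma_1}^{tr}J\gamma_1)\,M\gamma_2=\overline{\gamma_2}^{tr}(qJ)\gamma_2=qJ
\]
shows $\Gamma_n M\Gamma_n\subseteq\Delta_n(q)$, so that $\sharp(\Gamma_n\backslash\Gamma_nM\Gamma_n)\leq\sharp(\Gamma_n\backslash\Delta_n(q))$. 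Thus I would attach to each left coset $\Gamma_n N$, $N\in\Delta_n(q)$, an invariant taking only finitely many values, and then check that this invariant separates cosets.

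The invariant I would use is the row lattice $R_N:=\oh_\K^{1\times 2n}N$. The basic integrality input comes from rewriting $\overline{N}^{tr}JN=qJ$ as $N^{-1}=q^{-1}J^{-1}\overline{N}^{tr}J$, whence $qN^{-1}=-J\,\overline{N}^{tr}J\in\oh_\K^{2n\times 2n}$ (using $J^{-1}=-J$). So both $N$ and $qN^{-1}$ are integral, which gives the sandwich
\[
 q\,\oh_\K^{1\times 2n}=\oh_\K^{1\times 2n}(qN^{-1})N\subseteq\oh_\K^{1\times 2n}N=R_N\subseteq\oh_\K^{1\times 2n}.
\]
Since $\oh_\K^{1\times 2n}/q\,\oh_\K^{1\times 2n}\cong(\oh_\K/q\oh_\K)^{2n}$ is finite, there are only finitely many $\oh_\K$-submodules $R$ with $q\,\oh_\K^{1\times 2n}\subseteq R\subseteq\oh_\K^{1\times 2n}$, so $R_N$ ranges over a finite set. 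It is also constant on left cosets, since $\oh_\K^{1\times 2n}\gamma=\oh_\K^{1\times 2n}$ for $\gamma\in\Gamma_n$ gives $R_{\gamma N}=R_N$.

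The step I expect to carry the actual content is that $R_N$ separates left cosets. If $R_{N_1}=R_{N_2}$, then each row of $N_1$ lies in $\oh_\K^{1\times 2n}N_2$ and conversely, so $N_1=UN_2$ and $N_2=VN_1$ with $U,V\in\oh_\K^{2n\times 2n}$; invertibility of $N_1$ forces $UV=I$, hence $U\in\mathrm{GL}_{2n}(\oh_\K)$. I would stress that this uses only that $\oh_\K$ is a commutative ring, so the class number $h_\K$ plays no role here. The key observation is that $U=N_1N_2^{-1}$ is automatically unitary: using $\overline{N_i}^{tr}JN_i=qJ$ one computes
\[
 J[U]=\overline{N_2^{-1}}^{tr}\,\overline{N_1}^{tr}JN_1N_2^{-1}=\overline{N_2^{-1}}^{tr}(qJ)N_2^{-1}=q\,\overline{N_2^{-1}}^{tr}JN_2^{-1}=J,
\]
the last equality being the relation for $N_2$. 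Hence $U\in\oh_\K^{2n\times 2n}$ with $J[U]=J$, i.e. $U\in\Delta_n(1)=\Gamma_n$, so $\Gamma_nN_1=\Gamma_nN_2$. Therefore $\Gamma_nN\mapsto R_N$ is injective, and the number of left cosets in $\Delta_n(q)$, and a fortiori in $\Gamma_nM\Gamma_n$, is finite.
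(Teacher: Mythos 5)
Your proof is correct, but it takes a different route from the paper. The paper's argument is group-theoretic and two lines long: it uses the standard identity $\sharp(\Gamma_n\backslash\Gamma_n M\Gamma_n)=[\Gamma_n:\Gamma_n\cap M^{-1}\Gamma_n M]$ and observes that $\Gamma_n\cap M^{-1}\Gamma_n M\supseteq\Gamma_n[q]$ (which, like your argument, ultimately rests on the integrality of $qM^{-1}$), so the count is bounded by the index $[\Gamma_n:\Gamma_n[q]]\leq q^{8n^2}$. You instead prove the stronger statement that all of $\Delta_n(q)$ splits into finitely many left cosets, via the row-lattice invariant $R_N=\oh_\K^{1\times 2n}N$: the sandwich $q\,\oh_\K^{1\times 2n}\subseteq R_N\subseteq\oh_\K^{1\times 2n}$ gives finiteness of the possible values, and the computation $J[N_1N_2^{-1}]=J$ shows the invariant separates cosets. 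Each step of yours checks out, including the identification $\Delta_n(1)=\Gamma_n$ which makes $U=N_1N_2^{-1}$ land in the modular group. What each approach buys: the paper's is shorter and yields an explicit (if crude) numerical bound on the index; yours is the classical lattice-counting argument, gives the a priori stronger finiteness of $\Gamma_n\backslash\Delta_n(q)$ (useful later when one sums over all similitudes of a fixed factor), and makes transparent that only the commutativity of $\oh_\K$ and the finiteness of $\oh_\K/q\oh_\K$ are used, so the class number is irrelevant here — a point worth noting in a paper whose theme is precisely the complications caused by $h_\K>1$.
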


\begin{proof}
 Use $M^{-1} \Gamma_n M\cap \Gamma_n\supseteq \Gamma_n[q]$, hence
 \[
  \sharp(\Gamma_n\backslash \Gamma_n M \Gamma_n) = \bigl[\Gamma_n: \Gamma_n\cap M^{-1} \Gamma_n M]\leq [\Gamma_n:\Gamma_n[q]\bigr]\leq q^{8n^2} < \infty .
 \]
 \vspace*{-7ex}\\
\end{proof}
Hence $(\Gamma_n,\Delta_n)$ fulfills the Hecke-condition (cf. \cite{F}, \cite{K2}).

Let $\partial_k(G)\subseteq \oh_\K$ stand for the ideal generated by all $k\times k$ subdeterminants of an integral matrix $G$, which is invariant under multiplikation with unimodular matrices. Then \cite{B}, Theorem 1, resp. \cite{B2}, Lemma 1, implies

\begin{lemma}\label{Lemma_2}
If $M\in\Delta_n$ there exist $L^*,L'\in\Gamma_n$ such that 
\begin{align*}
  L^*M = \begin{pmatrix}
        A^* & B^* \\ 0 & D^*
       \end{pmatrix},\;\;
\oh_\K \det A^* = \partial_n\begin{pmatrix}
                           A \\ C
                          \end{pmatrix}, \\
  ML' = \begin{pmatrix}
        A' & 0 \\ C' & D'
       \end{pmatrix},\;\;
\oh_\K \det A' = \partial_n (A,B).                        
\end{align*}
\end{lemma}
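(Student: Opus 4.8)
The plan is to prove the first block decomposition and then deduce the second by a conjugation symmetry. First I would record two formal facts. Every $L\in\Gamma_n$ satisfies $\det L\,\overline{\det L}=1$, so $\det L\in\oh_\K^{\times}$ and $L\in GL_{2n}(\oh_\K)$; hence left multiplication by $L$ preserves the determinantal ideal of any $2n\times n$ integral matrix, $\partial_n(LG)=\partial_n(G)$, by applying the Cauchy--Binet formula to $L$ and to $L^{-1}$. Moreover the involution $M\mapsto\overline{M}^{tr}$ maps $\Delta_n(q)$ into itself and $\Gamma_n$ into itself: using $J^{-1}=-J$ one checks that $\overline{M}^{tr}JM=qJ$ is equivalent to $MJ\overline{M}^{tr}=qJ$.

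Granting this, the whole statement collapses to one claim: for $M\in\Delta_n$ there is $L^{\ast}\in\Gamma_n$ with $L^{\ast}\binom{A}{C}=\binom{A^{\ast}}{0}$. Indeed, the second decomposition is the first applied to $\overline{M}^{tr}$, whose left block column is $\overline{(A,B)}^{tr}=\binom{\overline{A}^{tr}}{\overline{B}^{tr}}$: reducing that column by some $L\in\Gamma_n$ and taking conjugate transposes turns $L\overline{M}^{tr}$ into $ML'$ with $L'=\overline{L}^{tr}\in\Gamma_n$ and lower-left block zero. The determinant identities are then immediate: $\partial_n\binom{A}{C}=\partial_n\binom{A^{\ast}}{0}=\oh_\K\det A^{\ast}$, where the last equality holds because every $n\times n$ minor of $\binom{A^{\ast}}{0}$ involving a bottom row vanishes; the identity $\oh_\K\det A'=\partial_n(A,B)$ follows in the same way once the conjugation, a ring automorphism of $\oh_\K$, is undone.

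For the core claim I would extract the structural input from $M\in\Delta_n(q)$: the $(1,1)$-block of $\overline{M}^{tr}JM=qJ$ gives $\overline{A}^{tr}C=\overline{C}^{tr}A$, so the rank-$n$ column lattice $\Lambda:=\binom{A}{C}\oh_\K^{n}$ is isotropic for the skew-Hermitian form $(x,y)\mapsto\overline{x}^{tr}Jy$, and the desired equation says precisely that $L^{\ast}$ carries $\Lambda$ into the standard Lagrangian $\binom{\oh_\K^{n}}{0}$. I would then construct $L^{\ast}$ by a descent on the bottom block, using the generators $\operatorname{diag}(U,\overline{U}^{-tr})$ with $U\in GL_n(\oh_\K)$, the unipotent blocks $\begin{pmatrix}I&0\\T&I\end{pmatrix}$ with $T=\overline{T}^{tr}$, and $J$: one moves a generator of the content ideal of the bottom block into place, clears the remaining entries by a unipotent step, and iterates, the isotropy relation ensuring that the partially reduced pair keeps its Hermitian symmetry so that the bottom block can ultimately be driven to $0$.

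The hard part will be that this descent is in essence an elementary-divisor argument, and it terminates cleanly only when $\oh_\K$ is a principal ideal domain, i.e. when $h_\K=1$; for $h_\K>1$ the entries of $C$ generate a non-principal ideal, no single unimodular step diagonalizes them, and the naive Euclidean descent stalls. This is exactly the class-number difficulty flagged in the introduction. To get around it I would replace Smith normal form by the Steinitz structure theory of finitely generated projective modules over the Dedekind domain $\oh_\K$ (each is $\oh_\K^{\,n-1}\oplus\mathfrak{a}$ for a fractional ideal $\mathfrak{a}$), together with Witt's theorem over $\K$ to first move the $\K$-span of $\Lambda$ onto $\binom{\K^{n}}{0}$ by an element of $U(n,n;\K)$ and then correct it to an integral element of $\Gamma_n$. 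Carrying out this integral correction is precisely the content of Braun's Theorem 1, resp. Lemma 1, cited above, which I would invoke to finish.
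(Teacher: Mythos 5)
Your proposal is correct and ends up in essentially the same place as the paper: the paper's entire proof of this lemma is the citation of Braun's Theorem~1 (resp.\ Lemma~1), which is exactly where you defer the substantive column-reduction step after correctly observing that the naive elementary-divisor descent stalls for $h_\K>1$. Your preliminary reductions (Cauchy--Binet invariance of $\partial_n$ under $\Gamma_n\subseteq GL_{2n}(\oh_\K)$, and obtaining the second decomposition from the first via the involution $M\mapsto\overline{M}^{tr}$) are correct but add nothing beyond what the cited result already covers.
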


The next step is a block diagonal decomposition in double cosets.

\begin{lemma}\label{Lemma_3}
Given $M\in\Delta_n$ there exist $L_1,L_2\in\Gamma_n$ such that 
\[
L_1 M L_2 = \begin{pmatrix}
             A^* & 0 \\ 0 & A^* H
            \end{pmatrix}\;\;
\text{for some} \;\; H= \overline{H}^{tr} \in\oh_\K^{n\times n}.
\]
\end{lemma}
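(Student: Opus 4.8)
The plan is to start from the two one-sided decompositions provided by Lemma~\ref{Lemma_2} and combine them. Lemma~\ref{Lemma_2} gives $L^* \in \Gamma_n$ with $L^*M$ block upper-triangular, and $L' \in \Gamma_n$ with $ML'$ block lower-triangular. I would first apply $L^*$ on the left to bring $M$ into the form $\begin{pmatrix} A^* & B^* \\ 0 & D^* \end{pmatrix}$. The unitary-similitude relation $\overline{M}^{tr}JM = qJ$ (for $M\in\Delta_n(q)$) then forces a relation between the blocks: writing out $J[L^*M]=qJ$ gives $\overline{A^*}^{tr}D^* = qI$ and $\overline{A^*}^{tr}B^* = \overline{B^*}^{tr}A^*$ (i.e. $\overline{A^*}^{tr}B^*$ is Hermitian), together with $\overline{D^*}^{tr}D^*$ being determined. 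In particular $D^* = q\,(\overline{A^*}^{tr})^{-1}$, so $D^*$ is essentially the transpose-inverse of $A^*$ up to the factor $q$.

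**Next I would** kill the off-diagonal block $B^*$ using right multiplication by a unimodular matrix of the block form $\begin{pmatrix} I & S \\ 0 & I \end{pmatrix}$ with $S=\overline{S}^{tr}\in\oh_\K^{n\times n}$; these are the translations and lie in $\Gamma_n$. Multiplying $\begin{pmatrix} A^* & B^* \\ 0 & D^* \end{pmatrix}\begin{pmatrix} I & S \\ 0 & I \end{pmatrix} = \begin{pmatrix} A^* & A^*S + B^* \\ 0 & D^* \end{pmatrix}$ shows I can adjust $B^*$ by $A^*S$. The obstacle here is that I cannot in general clear $B^*$ completely by such integral translations, since $A^*$ need not be invertible over $\oh_\K$; the Hermitian matrix $\overline{A^*}^{tr}B^*$ can only be reduced modulo the lattice $\overline{A^*}^{tr}A^*\oh_\K^{n}$. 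This is where the arithmetic of the (possibly non-principal) ideals enters, and is the part where class number $>1$ makes the naive argument fail. I would instead aim to arrange that the surviving off-diagonal block is absorbed into the diagonal after a further change of basis.

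**The cleaner route**, which I expect to be the intended one, is to diagonalize $A^*$ itself. Using elementary divisor theory over $\oh_\K$ (the same $\partial_k$-ideal machinery underlying Lemma~\ref{Lemma_2}), I would find $U,V\in\Gamma_n$ realized as block-diagonal unimodular matrices $\mathrm{diag}(P,\overline{P}^{tr\,-1})$ and $\mathrm{diag}(Q,\overline{Q}^{tr\,-1})$ with $P,Q\in GL_n(\oh_\K)$, acting by $A^*\mapsto PA^*Q$; such conjugations preserve the block-diagonal shape and transform $D^*=q(\overline{A^*}^{tr})^{-1}$ compatibly. After reducing $A^*$ this way and re-examining the relation $D^*=q(\overline{A^*}^{tr})^{-1}$, the matrix takes the form $\begin{pmatrix} A^* & 0 \\ 0 & q(\overline{A^*}^{tr})^{-1}\end{pmatrix}$. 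Setting $H := (\overline{A^*}^{tr}A^*)^{-1}\cdot q$, one checks $q(\overline{A^*}^{tr})^{-1} = A^*H$ and, crucially, that $H=\overline{H}^{tr}$ is Hermitian and lies in $\oh_\K^{n\times n}$; the integrality of $H$ follows because $\overline{A^*}^{tr}D^* = qI$ with $D^*$ integral forces $A^*H=D^*\in\oh_\K^{n\times n}$.

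**The main obstacle** I anticipate is precisely the elimination of the off-diagonal block over a non-principal ring: showing that after the left-triangularization the block $B^*$ can be removed by operations in $\Gamma_n$ without destroying integrality, rather than merely over the field $\K$. I would handle this by combining the two decompositions of Lemma~\ref{Lemma_2} (left-triangular \emph{and} right-triangular) so that the $\partial_n$-ideal conditions on both $\binom{A}{C}$ and $(A,B)$ pin down $A^*$ up to unimodular equivalence on both sides simultaneously, forcing the cross term to vanish; verifying that the resulting $H$ is genuinely integral and Hermitian — not just a rational Hermitian matrix — is the delicate point, and it is exactly where the similitude identity $J[M]=qJ$ must be used in full rather than just its determinant.
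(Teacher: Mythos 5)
Your proposal correctly locates the difficulty (clearing the off-diagonal block when $A^*$ is not unimodular and $h_\K>1$), but it does not actually overcome it, and the mechanism you sketch is not the one that works. The paper's proof hinges on a single extremal idea that is absent from your write-up: among \emph{all} representatives $\left(\begin{smallmatrix} A & B \\ C & D\end{smallmatrix}\right)$ of the full double coset $\Gamma_n M\Gamma_n$ with $\det A\neq 0$, one picks $A^*$ with $|\det A^*|$ minimal. Lemma~\ref{Lemma_2} applied to such a representative $M^*$ produces another representative whose upper-left block $A'$ satisfies $(\det A')\oh_\K=\partial_n(A^*,B^*)\supseteq(\det A^*)\oh_\K$, so minimality forces $\partial_n(A^*,B^*)=(\det A^*)\oh_\K$, and likewise for the first block column. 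By Cramer's rule this is exactly what makes $A^{*-1}B^*$ and $C^*A^{*-1}$ integral (and the similitude relation makes them Hermitian), so both off-diagonal blocks can be removed by genuine translation matrices in $\Gamma_n$. The same minimality applied once more to $\left(\begin{smallmatrix} A^* & D' \\ 0 & D'\end{smallmatrix}\right)$ gives $\partial_n(A^*,D')=(\det A^*)\oh_\K$ and hence the integrality of $H=A^{*-1}D'$. Without this argument your plan stalls exactly where you say it does: reducing $\overline{A^*}^{tr}B^*$ modulo $\overline{A^*}^{tr}A^*\oh_\K^n$ does not clear $B^*$, and your fallback of ``diagonalizing $A^*$'' by elementary divisors is not available over $\oh_\K$ when $h_\K>1$ (there is no Smith normal form over a non-principal Dedekind domain), nor would it remove $B^*$ even if it were.

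A second concrete gap is your justification of the integrality of $H$: from $\overline{A^*}^{tr}D^*=qI$ you conclude that $A^*H=D^*$ is integral, but that says nothing about $H=A^{*-1}D^*$ itself. Integrality of $H$ is again a determinantal-ideal statement, $\partial_n(A^*,D')=(\det A^*)\oh_\K$, and it is delivered only by the minimality of $|\det A^*|$ over the double coset. So the ``delicate point'' you flag at the end is real, but the remedy you propose (combining the two triangularizations of Lemma~\ref{Lemma_2}) does not by itself pin down the ideals; the extremal choice of representative is the missing ingredient.
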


\begin{proof}
Choose $A^*$ such that $|\det A^*|$ is minimal among all the matrices
\[
 \begin{pmatrix}
  A & B \\ C & D
 \end{pmatrix} \in \Gamma_n M \Gamma_n \;\;\text{with}\;\; \det A \neq 0.
\]
Let $M^*= \left(\begin{smallmatrix}
                 A^* & B^* \\ C^* & D^*
                \end{smallmatrix}\right) \in \Gamma_n M \Gamma_n$. Then
\[
M^*\Gamma_n = \begin{pmatrix}
               A' & 0 \\ C' & D'
              \end{pmatrix}\Gamma_n \;\;\text{and}\;\; \partial_n (A^*,B^*) = (\det A') \oh_\K
\]
follow from Lemma \ref{Lemma_2}. However the minimality of $|\det A^*|$ shows $(\det A^*)\oh_{\K} = (\det A')\oh_{\K}$. The same holds for the first block column. Hence
\[
 A^{*-1} B^* \;\;\text{and}\;\; C^* A^{*-1}
\]
are integral and Hermitian. Therefore we get a matrix
\[
 \begin{pmatrix}
  A^* & 0 \\ 0 & D'
 \end{pmatrix} \in\Gamma_n M \Gamma_n.
\]
As $\left(\begin{smallmatrix}
                 A^* & D' \\ 0 & D'
                \end{smallmatrix}\right) \in \Gamma_n M \Gamma_n$ we conclude  
\[
 \partial_n(A^*,D') = (\det A^*)\oh_{\K}
\]
hence    
\[
A^{*-1} D' = H \in \oh_\K^{n\times n} \;\;\text{and} \;\;H=\overline{H}^{tr}.
\]
\vspace*{-7ex}\\
\end{proof}

A simple consequence is

\begin{corollary}\label{Corollary_1} 
Given $M\in\Delta_n$ then
\[
 \Gamma_n M \Gamma_n = \Gamma_n M^{tr} \Gamma_n.
\]
\end{corollary}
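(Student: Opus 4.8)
The plan is to exploit Lemma \ref{Lemma_3} together with the observation that transposition is an anti-automorphism of $\oh_\K^{2n\times 2n}$ that preserves the Hermitian modular group. First I would record that $\Gamma_n$ is stable under $M\mapsto M^{tr}$. Indeed, for $L\in\Gamma_n$ the defining relation $\overline{L}^{tr}JL=J$ yields $LJ\overline{L}^{tr}=J$ (multiply out using $J^2=-I$, $J^{-1}=-J$), and taking complex conjugates of this gives $\overline{L}JL^{tr}=J$, which is exactly $L^{tr}\in\Gamma_n$. The same manipulation shows that each $\Delta_n(q)$ is stable under $M\mapsto\overline{M}$ and under $M\mapsto\overline{M}^{tr}$, hence under $M\mapsto M^{tr}$. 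Since transposition reverses products and $L_1^{tr},L_2^{tr}\in\Gamma_n$ whenever $L_1,L_2\in\Gamma_n$, the assignment $\Gamma_nM\Gamma_n\mapsto\Gamma_nM^{tr}\Gamma_n=(\Gamma_nM\Gamma_n)^{tr}$ is a well-defined operation on double cosets. It therefore suffices to verify the claim on the block-diagonal representative supplied by Lemma \ref{Lemma_3}.

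So let $D=\left(\begin{smallmatrix}A^* & 0\\ 0 & A^*H\end{smallmatrix}\right)\in\Gamma_nM\Gamma_n$ with $H=\overline{H}^{tr}$. The key point is that $D^{tr}$ is again a block-diagonal similitude of the same factor, now with top-left block $A^{*\,tr}$: using $H^{tr}=\overline{H}$ one computes $D^{tr}=\left(\begin{smallmatrix}A^{*\,tr} & 0\\ 0 & \overline{H}\,A^{*\,tr}\end{smallmatrix}\right)$, and the similitude condition forces the lower block to equal $q\,\overline{A^{*\,tr}}{}^{-tr}$. Thus the problem reduces to comparing two block-diagonal similitudes whose top-left blocks are $A^*$ and $A^{*\,tr}$.

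Next I would describe the available moves inside $\Gamma_n$. For any $U\in GL_n(\oh_\K)$ the matrix $\diag(U,\overline{U^{-1}}^{tr})$ lies in $\Gamma_n$, and multiplying $D$ on the left by $\diag(U,\overline{U^{-1}}^{tr})$ and on the right by $\diag(V,\overline{V^{-1}}^{tr})$ sends the top-left block $A^*$ to $UA^*V$ while keeping the matrix block-diagonal of factor $q$ (the lower block is then automatically the forced value). Consequently, if $A^*$ and $A^{*\,tr}$ are two-sided equivalent over $\oh_\K$, i.e. $UA^*V=A^{*\,tr}$ for some $U,V\in GL_n(\oh_\K)$, then the corresponding transformation carries $D$ precisely onto $D^{tr}$, whence $\Gamma_nD\Gamma_n=\Gamma_nD^{tr}\Gamma_n$ and the corollary follows.

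The main obstacle is the remaining claim that $A^*$ and $A^{*\,tr}$ are two-sided unimodularly equivalent, which is where the arbitrary class number intervenes: when $h_\K>1$ the ring $\oh_\K$ is not a principal ideal domain, so one cannot simply diagonalize $A^*$ into a transpose-invariant Smith form. I would instead invoke the structure theory over the Dedekind domain $\oh_\K$: two matrices are equivalent precisely when their cokernels are isomorphic, and these are classified by the determinantal (Fitting) ideals $\partial_k$. Since any $k\times k$ subdeterminant of $A^{*\,tr}$ is a subdeterminant of $A^*$, one has $\partial_k(A^*)=\partial_k(A^{*\,tr})$ for all $k$, so $A^*$ and $A^{*\,tr}$ have isomorphic cokernels and are equivalent. (The invariance $\partial_k(G)=\partial_k(G^{tr})$ is exactly the type of determinantal ideal invariant already used in Lemmas \ref{Lemma_2} and \ref{Lemma_3}.) This last step is the only place where care beyond formal manipulation is required.
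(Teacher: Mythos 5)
Your proof is correct and follows essentially the same route as the paper: reduce to a block-diagonal representative via Lemma \ref{Lemma_3}, use the two-sided unimodular equivalence $UA^*V=A^{*\,tr}$ (which the paper cites from the determinantal-divisor equivalence criterion over $\oh_\K$, its reference [En2], Theorem 2.2, and which you justify equivalently via cokernels and Fitting ideals), and let the similitude relation force the lower block. Your explicit verification that transposition preserves $\Gamma_n$ and acts on double cosets is a detail the paper leaves implicit but uses in the same way.
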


\begin{proof}
We assume $\left(\begin{smallmatrix}
                 A & 0 \\ 0 & D
                \end{smallmatrix}\right) \in \Gamma_n M \Gamma_n$ due to Lemma \ref{Lemma_3}. By means of  \cite{En2}, Theorem 2.2, there are $U,V\in GL_n(\oh_\K)$ such that 
\[
UAV = A^{tr}.
\]
Hence
\[
\begin{pmatrix}
 U & 0 \\ 0 & \overline{U}^{tr-1}
\end{pmatrix}
\begin{pmatrix}
 A & 0 \\ 0 & D
\end{pmatrix}
\begin{pmatrix}
 V & 0 \\ 0 & \overline{V}^{tr-1}
\end{pmatrix} = \begin{pmatrix}
		  A^{tr} & 0 \\ 0 & D^*
		 \end{pmatrix},
\]
$J[M] = qJ$ then implies $\overline{A}^{tr}D = \overline{A}D^* = qI$, hence $D^* = D^{tr}$.
\end{proof}

As $M\mapsto M^{tr}$ is an involution which keeps the double cosets invariant, we conclude from \cite{F} or \cite{K2} the following

\begin{theorem}\label{Theorem_1} 
$(\Gamma_n,\Delta_n)$ is a Hecke pair. The Hecke algebra $\Hcal(\Gamma_n,\Delta_n)$ is commutative.
\end{theorem}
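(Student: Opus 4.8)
The plan is to treat the two claims in turn. To see that $(\Gamma_n,\Delta_n)$ is a Hecke pair I would first check that $\Delta_n$ is a semigroup containing $\Gamma_n=\Delta_n(1)$ as its group of units: the similitude relation is multiplicative, since $J[M]=qJ$ and $J[N]=q'J$ give $J[MN]=\overline{N}^{tr}(qJ)N=qq'J$, so that $\Delta_n(q)\Delta_n(q')\subseteq\Delta_n(qq')$. The Hecke condition on left cosets is precisely Lemma \ref{Lemma_1}; the corresponding statement for right cosets I would deduce afterwards from the transposition involution, which simultaneously guarantees that a double coset has equally many left and right $\Gamma_n$-cosets.

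The decisive tool for commutativity is the involution $\sigma\colon M\mapsto M^{tr}$. First I would verify that $\sigma$ maps $\Delta_n(q)$ into itself: starting from $\overline{M}^{tr}JM=qJ$ one solves $M=qJ^{-1}(\overline{M}^{tr})^{-1}J$, and transposing (using $J^{tr}=-J$ and $J^2=-I$) yields $\overline{M}\,J\,M^{tr}=qJ$, i.e.\ $J[M^{tr}]=qJ$. Since transposition preserves integrality and reverses products, $(MN)^{tr}=N^{tr}M^{tr}$, and since $\Gamma_n^{tr}=\Gamma_n$, the map $\sigma$ is an anti-automorphism of the semigroup $\Delta_n$ that fixes the subgroup $\Gamma_n$. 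Corollary \ref{Corollary_1} then says exactly that $\sigma$ fixes every double coset: $\Gamma_n\sigma(M)\Gamma_n=\Gamma_n M^{tr}\Gamma_n=\Gamma_n M\Gamma_n$.

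From here I would invoke the classical criterion (as formulated in \cite{F}, \cite{K2}): an anti-automorphism of $(\Gamma_n,\Delta_n)$ fixing $\Gamma_n$ induces an anti-automorphism of $\Hcal(\Gamma_n,\Delta_n)$ which sends the basis element attached to $\Gamma_n M\Gamma_n$ to the one attached to $\Gamma_n M^{tr}\Gamma_n$. By the previous paragraph this map is the identity on basis elements, hence the identity on the whole algebra; being simultaneously an anti-automorphism, it forces $xy=\sigma(xy)=\sigma(y)\sigma(x)=yx$ for all $x,y\in\Hcal(\Gamma_n,\Delta_n)$, which is the assertion.

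The only point requiring care, and the step I expect to be the main obstacle, is checking that $\sigma$ genuinely descends to a well-defined anti-automorphism of the Hecke algebra, i.e.\ that it is compatible with the convolution product. This hinges on the interplay of left and right coset decompositions: writing $\Gamma_n M\Gamma_n=\bigsqcup_i\Gamma_n\alpha_i$ and applying $\sigma$ produces $\Gamma_n M^{tr}\Gamma_n=\bigsqcup_i\alpha_i^{tr}\Gamma_n$, a decomposition of the same double coset into right cosets. Once this dictionary between the two one-sided decompositions is recorded, the anti-multiplicativity of $\sigma$ on the structure constants is a routine computation, and all the substantive work has already been carried out in Lemma \ref{Lemma_3} and Corollary \ref{Corollary_1}.
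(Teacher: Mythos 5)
Your proposal is correct and follows essentially the same route as the paper: the Hecke condition comes from Lemma \ref{Lemma_1}, and commutativity follows from the transposition anti-involution fixing every double coset (Lemma \ref{Lemma_3} plus Corollary \ref{Corollary_1}) together with the standard criterion from \cite{F}, \cite{K2}. The additional verifications you supply (that $M\mapsto M^{tr}$ preserves $\Delta_n(q)$, and that the anti-involution descends to the convolution product) are exactly the routine details the paper delegates to the cited references.
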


Our next aim is to describe particular products in this Hecke algebra. Therefore we need

\begin{lemma}\label{Lemma_4} 
Let $q,r \in\N$ be coprime and $d_\K \neq -3,-4$. Then 
\[
\Gamma_n[q]\cdot \Gamma_n[r] = \Gamma_n.
\]
\end{lemma}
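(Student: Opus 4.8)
The inclusion $\Gamma_n[q]\,\Gamma_n[r]\subseteq\Gamma_n$ is trivial, and since both principal congruence subgroups are normal in $\Gamma_n$, the product $\mathcal{P}:=\Gamma_n[q]\,\Gamma_n[r]$ is again a (normal) subgroup. It therefore suffices to place a generating system of $\Gamma_n$ inside $\mathcal{P}$. The plan is to use that $\Gamma_n$ is generated by the translations $T_S=\left(\begin{smallmatrix}I&S\\0&I\end{smallmatrix}\right)$ with $S=\overline{S}^{tr}\in\oh_\K^{n\times n}$, the rotations $R_U=\left(\begin{smallmatrix}U&0\\0&(\overline{U}^{tr})^{-1}\end{smallmatrix}\right)$ with $U\in GL_n(\oh_\K)$, and the element $J$: indeed a block-triangular element of $\Gamma_n$ is forced by $J[M]=J$ to have the shape $\left(\begin{smallmatrix}A&B\\0&(\overline{A}^{tr})^{-1}\end{smallmatrix}\right)=R_A\,T_{A^{-1}B}$ with $A^{-1}B$ Hermitian, so Lemma \ref{Lemma_2} (via Braun's elementary divisor theory) reduces generation to these pieces even for arbitrary class number. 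Throughout I use the reformulation that $M\in\mathcal{P}$ if and only if there is $M'\in\Gamma_n$ with $M'\equiv I\pmod q$ and $M'\equiv M\pmod r$; for then $M=M'\cdot(M'^{-1}M)$ with $M'\in\Gamma_n[q]$ and $M'^{-1}M\in\Gamma_n[r]$.

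For a translation $T_S$ I solve the entrywise congruences $S'\equiv 0\pmod q$, $S'\equiv S\pmod r$ by the Chinese remainder theorem, keeping $S'$ Hermitian (possible since $\gcd(q,r)=1$, the off-diagonal entries being solved in $\oh_\K$ and the rational-integral diagonal in $\Z$); then $T_{S'}$ realizes the desired $M'$, so $T_S\in\mathcal{P}$, and the same works for lower translations. Writing $J=T_{-I}\left(\begin{smallmatrix}I&0\\I&I\end{smallmatrix}\right)T_{-I}$ as a product of three translations and using that $\mathcal{P}$ is a subgroup gives $J\in\mathcal{P}$; the analogous factorization shows that every coordinate Weyl element $J_{(i)}$, acting as $\left(\begin{smallmatrix}0&-1\\1&0\end{smallmatrix}\right)$ on the $i$-th and $(n+i)$-th slots and as the identity elsewhere, also lies in $\mathcal{P}$.

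For a rotation $R_U$ the hypothesis $d_\K\neq-3,-4$ enters decisively, since it forces $\oh_\K^{\times}=\{\pm1\}$ and hence $\det U\in\{\pm1\}$. If $\det U=1$, the surjectivity of the reduction $SL_n(\oh_\K)\to SL_n(\oh_\K/qr\,\oh_\K)$ together with the Chinese remainder theorem yields $V\in SL_n(\oh_\K)$ with $V\equiv I\pmod q$ and $V\equiv U\pmod r$; then $R_V$ is the required $M'$ (note $(\overline{V}^{tr})^{-1}\equiv(\overline{U}^{tr})^{-1}\pmod r$), so $R_U\in\mathcal{P}$. If $\det U=-1$, I write $U=U_0\,\operatorname{diag}(-1,1,\dots,1)$ with $U_0\in SL_n(\oh_\K)$, which reduces to the previous case together with $R_{\operatorname{diag}(-1,1,\dots,1)}=J_{(1)}^{\,2}\in\mathcal{P}$. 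Thus $T_S$, $J$ and all $R_U$ lie in the subgroup $\mathcal{P}$, whence they generate $\Gamma_n$ and $\mathcal{P}=\Gamma_n$.

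The main obstacle is the generation statement for $h_\K>1$: one cannot clear an arbitrary column of $M\in\Gamma_n$ to a standard basis vector by $GL_n(\oh_\K)$ as directly as in the principal case, and this is exactly where Lemma \ref{Lemma_2} must be invoked. A conceptual alternative, bypassing generators, is to appeal to strong approximation for the simply connected group $SU(n,n)$, which makes $SU(n,n;\oh_\K)\to SU(n,n;\oh_\K/N\oh_\K)$ surjective for every $N$; the passage to the full unitary group $\Gamma_n$ is then precisely the $\{\pm1\}$-bookkeeping carried out above. In either approach the argument collapses for $d_\K\in\{-3,-4\}$, where the extra roots of unity produce rigid scalar rotations $R_{\zeta I}$ that cannot be split across the coprime moduli $q$ and $r$.
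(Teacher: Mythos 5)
Your overall strategy is the same as the paper's: both arguments use that $\Gamma_n[q]\,\Gamma_n[r]$ is a (normal) subgroup, reduce to a generating set of $\Gamma_n$, and dispose of each generator by a Chinese-remainder splitting. Your choice of a Hermitian $S'$ with $S'\equiv 0\pmod q$ and $S'\equiv S\pmod r$ is exactly the paper's computation $\left(\begin{smallmatrix}I&\ell H\\0&I\end{smallmatrix}\right)\left(\begin{smallmatrix}I&H\\0&I\end{smallmatrix}\right)\in\Gamma_n[r]$ with $\ell\equiv 0\pmod q$, $\ell\equiv -1\pmod r$.

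The one genuine weak point is your justification of the generation statement itself. You assert that Lemma \ref{Lemma_2} ``reduces generation to these pieces even for arbitrary class number,'' but this is circular: Lemma \ref{Lemma_2} produces \emph{some} $L^*\in\Gamma_n$ with $L^*M$ block triangular, and you have no control over $L^*$; knowing $L^*M=R_A T_{A^{-1}B}$ expresses $M$ as a product of the listed generators only if you already know that $L^*$ is such a product, which is what is to be proved. For $h_\K>1$ the generation of $\Gamma_n$ by $J$, translations and rotations is a nontrivial theorem (the usual column-reduction argument breaks down because $\oh_\K$ is not a PID), and the paper simply imports it as \cite{De}, Theorem 2.1, rather than deriving it. Your fallback via strong approximation for $SU(n,n)$ does close the gap rigorously, and in fact makes the generator bookkeeping unnecessary except for the $\det M=\pm1$ coset, so the proof can be completed along your lines; but as written, the main line of your argument rests on an unproved and non-obvious generation claim. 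The remaining steps — the CRT construction of $S'$, the surjectivity of $SL_n(\oh_\K)\to SL_n(\oh_\K/qr\,\oh_\K)$ for the rotations, and $\det U\in\{\pm1\}$ from $d_\K\neq-3,-4$ — are fine.
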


\begin{proof}
As the principal congruence subgroups are normal, we may restrict to generators of $\Gamma_n$. We use the generators from \cite{De}, Theorem 2.1, for which the claim follows by a simple calculation of the form
\[
 \begin{pmatrix}
  I & \ell H \\ 0 & I
 \end{pmatrix}
  \begin{pmatrix}
  I & H \\ 0 & I
 \end{pmatrix} \in\Gamma_n[r]
\]
for $H =\overline{H}^{tr} \in \oh_\K^{n\times n}$ and some $\ell\in \N$, $q\mid\ell$.
\end{proof}

An application is described in 

\begin{corollary}\label{Corollary_2} 
Given $M\in \Delta_n(q)$, $r\in \N$, $\gcd(q,r) = 1$ then
\[
 \Gamma_n M \Gamma_n = \Gamma_n M \Gamma_n[r].
\]
\end{corollary}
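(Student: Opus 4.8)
The plan is to prove only the inclusion $\Gamma_n M \Gamma_n \subseteq \Gamma_n M \Gamma_n[r]$, as the reverse one is immediate from $\Gamma_n[r] \subseteq \Gamma_n$. Conceptually, the point is that shrinking the right-hand group from $\Gamma_n$ to $\Gamma_n[r]$ costs nothing, because $M$ can absorb the complementary congruence part $\Gamma_n[q]$ into the left factor. First I would reduce to showing $ML \in \Gamma_n M \Gamma_n[r]$ for an arbitrary $L \in \Gamma_n$; a general element $gMh$ of the double coset is then handled by the same argument applied to $Mh$, reinstating the harmless left factor $g \in \Gamma_n$ afterwards.

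The two ingredients I would combine are Lemma \ref{Lemma_4} and the conjugation relation already established inside the proof of Lemma \ref{Lemma_1}. Since $\gcd(q,r) = 1$, Lemma \ref{Lemma_4} factors $L = L_q L_r$ with $L_q \in \Gamma_n[q]$ and $L_r \in \Gamma_n[r]$. The key step is that $M$ conjugates $\Gamma_n[q]$ into $\Gamma_n$: the proof of Lemma \ref{Lemma_1} records $\Gamma_n[q] \subseteq M^{-1}\Gamma_n M \cap \Gamma_n$, which says precisely that $\widetilde{L} := M L_q M^{-1} \in \Gamma_n$. (If one wants this explicitly, write $L_q = I + qN$ with $N \in \oh_\K^{2n\times 2n}$ and use $M^{-1} = q^{-1}J^{-1}\overline{M}^{tr}J \in q^{-1}\oh_\K^{2n\times 2n}$, so that $M L_q M^{-1} = I + q M N M^{-1}$ is integral; it lies in $U(n,n;\C)$ because $J[M] = qJ$ while $J[L_q] = J$.)

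Combining these, $ML = M L_q L_r = (M L_q M^{-1})\, M L_r = \widetilde{L}\, M L_r \in \Gamma_n M \Gamma_n[r]$, which is exactly the inclusion sought. Passing from $ML$ to a general $gMh$ and then taking the union over all such elements finishes the argument.

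The step I expect to be the genuine obstacle is the appeal to Lemma \ref{Lemma_4}, since that lemma carries the hypothesis $d_\K \neq -3,-4$ whereas the corollary is stated without it. The factorization $L = L_q L_r$ breaks down, or at least needs extra care, precisely because of the additional units in those two fields. To claim the corollary in full generality I would therefore have to treat $d_\K = -3$ and $d_\K = -4$ separately, verifying the required congruence identities directly on the finitely many extra unimodular generators, in the spirit of the computation behind Lemma \ref{Lemma_4}.
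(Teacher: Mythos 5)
Your proof is correct and is essentially the paper's own argument, which consists precisely of the two ingredients you identify: the inclusion $M^{-1}\Gamma_n M\cap\Gamma_n\supseteq\Gamma_n[q]$ from the proof of Lemma \ref{Lemma_1} together with the factorization $\Gamma_n[q]\cdot\Gamma_n[r]=\Gamma_n$ from Lemma \ref{Lemma_4}; you merely spell out the details the paper leaves implicit. Your observation that Lemma \ref{Lemma_4} carries the hypothesis $d_\K\neq-3,-4$ while the corollary is stated without it is a fair catch about the paper itself, not a defect of your argument.
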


\begin{proof}
Clearly $M^{-1} \Gamma_n M \cap \Gamma_n \supseteq \Gamma_n[q]$ holds. Now apply Lemma \ref{Lemma_4}.
\end{proof}

We consider a particular case. Let $M\in\Delta_n(q)$, $\gcd(q,r)=1$ and $M\equiv I (\bmod\,r)$ as well as
\[
\Gamma_n M \Gamma_n = \overset{\bullet}{\bigcup\limits_{1\leq j\leq \ell}} \Gamma_n L_j,\;\; L_j\equiv I (\bmod \,r)
\]
due to Corollary \ref{Corollary_2}. Then we immediately obtain
\begin{gather*}\tag{1}\label{gl_1}
\Gamma_n[r] M \Gamma_n[r] = \overset{\bullet}{\bigcup\limits_{1\leq j\leq \ell}} \Gamma_n[r] L_j
\end{gather*}
as well as
\begin{gather*}\tag{2}\label{gl_2}
 \Gamma_n M \Gamma_n = \overset{\bullet}{\bigcup\limits_{1\leq j\leq \ell}} \Gamma_n RL_j R^{-1}\;\; \text{for}\;\; R\in \Delta_n(r).
\end{gather*}

An immediate consequence is 

\begin{corollary}\label{Corollary_3} 
Given $M\in\Delta_n(q)$, $L\in\Delta_n(r)$ with coprime $q,r\in \N$, then
\[
 \Gamma_n M \Gamma_n \cdot \Gamma_n L \Gamma_n = \Gamma_n M L \Gamma_n.
\]
\end{corollary}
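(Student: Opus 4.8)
The plan is to establish the set identity $\Gamma_n M\Gamma_n\cdot\Gamma_n L\Gamma_n=\Gamma_n ML\Gamma_n$ first and then upgrade it to the corresponding identity in $\Hcal(\Gamma_n,\Delta_n)$ by a degree count. Since $\Gamma_n M\Gamma_n\cdot\Gamma_n L\Gamma_n=\Gamma_n M\Gamma_n L\Gamma_n$, and this set is stable under left and right multiplication by $\Gamma_n$ while containing $ML$, the inclusion $\Gamma_n ML\Gamma_n\subseteq\Gamma_n M\Gamma_n\cdot\Gamma_n L\Gamma_n$ is immediate. The whole content therefore lies in showing $M\gamma L\in\Gamma_n ML\Gamma_n$ for every $\gamma\in\Gamma_n$.

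For this I would exploit the coprimality through Lemma~\ref{Lemma_4}: write $\gamma=\gamma_1\gamma_2$ with $\gamma_1\in\Gamma_n[q]$ and $\gamma_2\in\Gamma_n[r]$. The key observation is that $M$ absorbs its $q$-part and $L$ absorbs its $r$-part. Indeed, from $J[M]=qJ$ one gets $qM^{-1}=-J\overline{M}^{tr}J\in\oh_\K^{2n\times 2n}$, so if $\gamma_1=I+qX$ with $X\in\oh_\K^{2n\times 2n}$, then $M\gamma_1M^{-1}=I+MX(qM^{-1})$ is integral; a short computation using $\overline{\gamma_1}^{tr}J\gamma_1=J$ together with $\overline{M}^{tr}JM=qJ$ gives $J[M\gamma_1M^{-1}]=J$, hence $M\gamma_1M^{-1}\in\Gamma_n$. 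The same argument applied to $L\in\Delta_n(r)$ and $\gamma_2\in\Gamma_n[r]$ yields $L^{-1}\gamma_2L\in\Gamma_n$. Consequently
\[
M\gamma L=(M\gamma_1M^{-1})\,ML\,(L^{-1}\gamma_2L)\in\Gamma_n\,ML\,\Gamma_n,
\]
which proves the set identity.

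To obtain the statement in the Hecke algebra I would verify that degrees multiply. Decompose $\Gamma_n M\Gamma_n=\bigcup_i\Gamma_n\alpha_i$ and $\Gamma_n L\Gamma_n=\bigcup_j\Gamma_n\beta_j$ into disjoint left cosets, with $\alpha_i\in\Delta_n(q)$, $\beta_j\in\Delta_n(r)$, so that $\Gamma_n M\Gamma_n\cdot\Gamma_n L\Gamma_n=\sum_{i,j}\Gamma_n\alpha_i\beta_j$. If $\Gamma_n\alpha_i\beta_j=\Gamma_n\alpha_{i'}\beta_{j'}$, say $\alpha_{i'}\beta_{j'}=g\alpha_i\beta_j$ with $g\in\Gamma_n$, then $P:=\beta_{j'}\beta_j^{-1}=\alpha_{i'}^{-1}g\alpha_i$. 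The right-hand expression shows $qP$ is integral and the left-hand one shows $rP$ is integral, so $\gcd(q,r)=1$ forces $P\in\oh_\K^{2n\times 2n}$; since $J[P]=J$ we obtain $P\in\Gamma_n$, whence $\Gamma_n\beta_{j'}=\Gamma_n\beta_j$, i.e. $j=j'$, and then $P=I$ and $i=i'$. Thus the cosets $\Gamma_n\alpha_i\beta_j$ are pairwise distinct, and together with the set identity they exhaust $\Gamma_n ML\Gamma_n$. Hence $\sharp(\Gamma_n\backslash\Gamma_n ML\Gamma_n)=\sharp(\Gamma_n\backslash\Gamma_n M\Gamma_n)\cdot\sharp(\Gamma_n\backslash\Gamma_n L\Gamma_n)$, and the product equals the single double coset $\Gamma_n ML\Gamma_n$ with coefficient one.

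The step I expect to be the main obstacle is the splitting $\gamma=\gamma_1\gamma_2$: it rests on Lemma~\ref{Lemma_4}, which is stated only for $d_\K\neq-3,-4$, so the two exceptional fields with extra units would require a separate direct check, whereas the conjugation identities and the coprime-denominator argument for distinctness are uniform in $d_\K$. The other delicate point is the combined integrality-plus-unitarity verification for $M\gamma_1M^{-1}$ and $L^{-1}\gamma_2L$, since this is precisely where the coprimality of $q$ and $r$ is converted into honest elements of $\Gamma_n$.
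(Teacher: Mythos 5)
Your proof is correct and follows essentially the same route as the paper: the coprimality is converted into the conjugation facts $M\gamma_1M^{-1}\in\Gamma_n$ for $\gamma_1\in\Gamma_n[q]$ and $L^{-1}\gamma_2L\in\Gamma_n$ for $\gamma_2\in\Gamma_n[r]$, resting on Lemma~\ref{Lemma_4} (the paper packages this as Corollary~\ref{Corollary_2}, choosing representatives $MK_i$ with $K_i\in\Gamma_n[r]$ so that only one conjugation is needed). Your explicit coprime-denominator argument for the distinctness of the cosets $\Gamma_n\alpha_i\beta_j$ supplies exactly the detail the paper dismisses as ``clearly'' disjoint, and your caveat about $d_\K=-3,-4$ applies equally to the paper's own proof, since it also passes through Lemma~\ref{Lemma_4}.
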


\begin{proof}
We choose decompositions
\[
 \Gamma_n M \Gamma_n = \bigcup_i \Gamma_n M K_i, \; K_i \in \Gamma_n[r],\;\; \Gamma_n L \Gamma_n = \bigcup_j \Gamma_n L R_j
\]
due to Corollary \ref{Corollary_2}. Clearly the right cosets
\[
 \Gamma_n M K_i L R_j
\]
are mutually disjoint and contained in $\Gamma_n M L \Gamma_n$. Thus the claim follows.
\end{proof}

In the case of $h_\K = 1$ the Hecke algebra coincides with the tensor product of its primary components
\[
\Hcal_{n,p} = \Hcal (\Gamma_n,\Delta_{n,p}), \;\; p\;\text{prime}.
\]
In this situation the structure is described in \cite{K6}. If $h_\K> 1$ this result is no longer true (cf. \cite{En}, 3.3.6), e.g. for $\K=\Q(\sqrt{-5})$
\[
 \Gamma_2 \diag(1,1+\sqrt{-5},6,1+\sqrt{-5}) \Gamma_2 \not\in \bigotimes\limits_p \Hcal_{n,p}.
\]
Many authors define the Hecke algebra as the tensor product of its $p$-components (cf. \cite{Rm}). But the tensor product is a proper subalgebra of $\Hcal(\Gamma_n,\Delta_n)$ in general.

The example shows that it is much more difficult to look at the decomposition of double cosets.

\begin{lemma}\label{Lemma_5} 
Let $M\in \Delta_n(q)$, $q=r_1 r_2\in \N$, where $r_1$ is a product of split or ramified primes and $r_2$ a product of inert primes. Then there exist $M_j\in \Delta_n(r_j)$, $j=1,2$, such that
\[
\Gamma_n M \Gamma_n = \Gamma_n M_1 \Gamma_n \cdot \Gamma_n M_2 \Gamma_n.
\]
\end{lemma}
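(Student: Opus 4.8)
The plan is to reduce to a block-diagonal representative by Lemma \ref{Lemma_3}, to factor its diagonal blocks according to the coprime splitting $q=r_1r_2$ by means of an intermediate lattice, and then to recombine the two factors through Corollary \ref{Corollary_3}. By Lemma \ref{Lemma_3} I may assume
\[
 M = \begin{pmatrix} A^* & 0 \\ 0 & A^*H\end{pmatrix},\qquad H=\overline{H}^{tr}\in\oh_\K^{n\times n},
\]
where $J[M]=qJ$ forces $\overline{A^*}^{tr}A^*H=qI$, i.e. $A^*H=q\,\overline{A^*}^{tr-1}$. Since $qM^{-1}$ is integral, so is $q\,A^{*-1}$, and hence every elementary divisor ideal $\mathfrak a_1\mid\cdots\mid\mathfrak a_n$ of $A^*$ over the Dedekind ring $\oh_\K$ divides $(q)=(r_1r_2)$. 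The goal is to write $A^*=A_1A_2$ with $A_1,A_2\in\oh_\K^{n\times n}$ whose elementary divisors divide $(r_1)$ and $(r_2)$ respectively; setting
\[
 M_j=\begin{pmatrix} A_j & 0 \\ 0 & r_j\,\overline{A_j}^{tr-1}\end{pmatrix},\qquad j=1,2,
\]
one has $J[M_j]=r_jJ$ by construction, and $M_j$ is integral because $r_j\,\overline{A_j}^{tr-1}$ is, so $M_j\in\Delta_n(r_j)$; a direct block computation, using $\overline{A_1A_2}^{tr}=\overline{A_2}^{tr}\,\overline{A_1}^{tr}$, then gives $M_1M_2=M$ exactly.

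To produce the factorization I would pass to the cokernel $\oh_\K^n/A^*\oh_\K^n$, a module annihilated by $q$. As $\gcd(r_1,r_2)=1$, the Chinese Remainder Theorem splits it into its $r_1$-part and its $r_2$-part; let $L$ be the preimage of the $r_2$-part under $\oh_\K^n\to\oh_\K^n/A^*\oh_\K^n$, so that
\[
 A^*\oh_\K^n\subseteq L\subseteq\oh_\K^n,\qquad \oh_\K^n/L\cong\bigoplus_i\oh_\K/\mathfrak b_i,\qquad L/A^*\oh_\K^n\cong\bigoplus_i\oh_\K/\mathfrak c_i,
\]
with $\mathfrak a_i=\mathfrak b_i\mathfrak c_i$, $\mathfrak b_i\mid(r_1)$ and $\mathfrak c_i\mid(r_2)$. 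If $L$ is free, a basis of $L$ provides an integral matrix $A_1$ with $A_1\oh_\K^n=L$ and elementary divisors $\mathfrak b_i$; then $A_2:=A_1^{-1}A^*$ is integral (because $A^*\oh_\K^n\subseteq L$) with elementary divisors $\mathfrak c_i$, and $A_1A_2=A^*$ as required.

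The one genuinely nonformal point --- and the crux for $h_\K>1$ --- is the freeness of $L$. A full-rank lattice $L\subseteq\oh_\K^n$ is free exactly when its Steinitz class, the ideal class of $\mathfrak B:=\mathfrak b_1\cdots\mathfrak b_n$, is trivial, and in general this fails; this is precisely the obstruction behind the example for $\Q(\sqrt{-5})$. Here the inertness of the primes dividing $r_2$ removes it: for an inert prime $p$ the ideal $(p)$ is itself prime and principal, so each $\mathfrak c_i$ is a product of such ideals and is therefore principal, whence $\mathfrak C:=\mathfrak c_1\cdots\mathfrak c_n$ is principal; since $\mathfrak a_1\cdots\mathfrak a_n=(\det A^*)$ is principal as well, so is $\mathfrak B=(\det A^*)\,\mathfrak C^{-1}$. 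Thus $L$ is free. This is the only step using the split/ramified-versus-inert distinction, and it reflects the Siegel-type behaviour of the inert part announced in the introduction.

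Finally, with $M=M_1M_2$, $M_j\in\Delta_n(r_j)$ and $\gcd(r_1,r_2)=1$, Corollary \ref{Corollary_3} yields
\[
 \Gamma_n M\Gamma_n=\Gamma_n M_1M_2\Gamma_n=\Gamma_n M_1\Gamma_n\cdot\Gamma_n M_2\Gamma_n,
\]
which is the assertion. The main obstacle throughout is the Steinitz-class bookkeeping of the preceding paragraph; the remainder is the elementary divisor formalism over $\oh_\K$ together with Corollary \ref{Corollary_3}.
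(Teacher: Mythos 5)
Your proof is correct, and its skeleton --- pass to a block-diagonal representative via Lemma \ref{Lemma_3}, split the upper-left block according to the coprime factorization $q=r_1r_2$, and recombine with Corollary \ref{Corollary_3} --- is exactly the paper's. The central step, however, is executed differently. The paper never factors $A^*$: it splits each determinantal divisor $\partial_k(A^*)=\Ical_k\cdot\oh_\K a_k$ into its split/ramified part and its inert part, invokes an existence theorem (\cite{En2}, Theorem 2.1) to produce matrices $A_1,A_2$ with $\partial_k(A_1)=\Ical_k$ and $\partial_k(A_2)=\oh_\K a_k$, uses multiplicativity of coprime determinantal divisors to see that $A_1A_2$ and $A^*$ have the same $\partial_k$, and then concludes $\Gamma_nM_1M_2\Gamma_n=\Gamma_nM\Gamma_n$ from the equivalence theorem (\cite{En2}, Theorem 2.2); so there $M_1M_2$ need not equal $M$, only the double cosets agree. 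You instead produce an honest factorization $A^*=A_1A_2$ via the intermediate lattice $L$ (the preimage of the $r_2$-part of the cokernel), with the freeness of $L$ secured by your Steinitz-class computation; that computation is where inertness enters for you, whereas in the paper it enters through the observation that the inert part of each $\partial_k$ is generated by a rational integer, hence principal --- the same phenomenon in different clothing. Your route is more self-contained and makes the class-group obstruction (and why it vanishes here) explicit; the paper's is shorter because the needed existence and uniqueness statements for determinantal divisors over $\oh_\K$ are taken off the shelf from \cite{En2}. Both arguments are complete and valid.
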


\begin{proof}
We may assume $M=\left(\begin{smallmatrix}
                        A & 0 \\ 0 & D
                       \end{smallmatrix}\right)$ 
due to Lemma \ref{Lemma_3} and consider the determinantal divisors. Let
\[
\partial_k(A) = \Ical_k\cdot \oh_\K a_k,\;\,k=1,\ldots,n,
\]
where $a_k\in\N$ divides $r_2^n$ and $\Ical_k$ is not divisible by $p\oh_\K$ for any inert prime $p$. In view of \cite{En2}, Theorem 2.1, there exist
\[
 A_j\in\oh_\K^{n\times n},\; j=1,2,\;\; \partial_k(A_1) = \Ical_k,\;\;\partial_k(A_2) = \oh_\K a_k,\; k=1,\ldots,n.
\]
Define $D_j= r_j\overline{A}_j^{tr-1}$. Then we have
\begin{align*}
 & M_j = \begin{pmatrix}
          A_j & 0 \\ 0 & D_j
         \end{pmatrix} \in\Delta_n(r_j),\; j=1,2 \,, \\
         & \Gamma_n M_1 \Gamma_n \cdot \Gamma_n M_2 \Gamma_n = \Gamma_n M_1 M_2 \Gamma_2
\end{align*}
by means of Corollary \ref{Corollary_3}. As $\Ical_k$ and $\oh_\K a_k$ are coprime, we conclude
\[
\partial_k(A_1A_2) = \partial_k(A_1)\cdot \partial_k(A_2) = \partial_k(A), \; k=1,\ldots,n\,,
\]
from \cite{En2}, Theorem 4.2, or \cite{En}, Satz 2.6.8. Then
\[
\Gamma_n M_1 M_2 \Gamma_2 = \Gamma_n M \Gamma_n
\]
follows from \cite{En2}, Theorem 2.2.
\end{proof}

\section{The inert part of the Hecke algebra}

Lemma \ref{Lemma_5} shows that it is interesting to have a closer look at the inert part defined by
\[
 \Delta_n^{\text{inert}} = \bigcup_{\substack{q \in \N \\ p\mid q \Rightarrow p\;\text{inert}} }
 \Delta_n(q)
\]
and call
\[
 \Hcal_n^{\text{inert}} = \Hcal(\Gamma_n, \Delta_n^{\text{inert}})
\]
the inert part of the Hecke algebra.

Given $M\in \Delta_n(q)$, where $q$ is only divided by inert primes, we conclude that $\partial_k(M) = \oh_\K r$, where $r\mid q^n$. Thus we can apply Theorem \ref{Theorem_1} as well as \cite{En2}, Theorem 2.2, in order to obtain the elementary divisor theorem similar to the case of the Siegel modular group (cf. \cite{F}, \cite{K2}).

\begin{theorem}\label{Theorem_2} 
Given $M\in\Delta_n(q) \subseteq \Delta_n^{\text{inert}}$ the double coset $\Gamma_n M \Gamma_n$ contains a unique representative
\begin{align*}
 & \diag(a_1,\ldots,a_n,d_1,\ldots,d_n),\;a_j,d_j \in\N,\;a_j d_j=q \,, \\
 & a_1 |a_2|\ldots, |a_n|d_n|d_{n-1}| \ldots | d_1.
\end{align*}
\end{theorem}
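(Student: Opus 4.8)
The plan is to treat existence and uniqueness separately, starting with uniqueness, which is the more transparent half.

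For uniqueness I would exploit that the determinantal divisors $\partial_k$ are invariant under left and right multiplication by $GL_{2n}(\oh_\K)$, hence in particular under $\Gamma_n\times\Gamma_n$, so they are invariants of the entire double coset. For a representative in the asserted interlaced form $\diag(a_1,\ldots,a_n,d_1,\ldots,d_n)$ with $a_1\mid\cdots\mid a_n\mid d_n\mid\cdots\mid d_1$, the chain condition makes $a_1,\ldots,a_k$ the most divisible $k$ diagonal entries, so that $\partial_k=\oh_\K\,a_1\cdots a_k$ for $k\leq n$ (using that by the remark preceding the theorem all these ideals are generated by rational integers, so the greatest common divisor is numeric). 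Thus $\oh_\K a_k=\partial_k\partial_{k-1}^{-1}$ is recovered from the double coset, and with it $d_k=q/a_k$. Consequently two representatives in interlaced form lying in one double coset must coincide, and the $a_k$ may be normalised to lie in $\N$.

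For existence I would first reduce to prime power similitude. Writing $q=\prod_i p_i^{e_i}$ with the $p_i$ inert, the determinantal-divisor splitting used in Lemma \ref{Lemma_5}, together with Corollary \ref{Corollary_3}, factors the double coset as a product $\prod_i \Gamma_n M_i\Gamma_n$ with $M_i\in\Delta_n(p_i^{e_i})$; since the product of interlaced diagonal matrices is again interlaced (divisibility being tested prime by prime), it suffices to produce the normal form when $q=p^e$. For such $q$, Lemma \ref{Lemma_3} places a block-diagonal matrix $\diag(A,D)$ in the double coset, and the relation $J[M]=qJ$ forces $\overline{A}^{tr}D=qI$, i.e. $D=q\,\overline{A}^{tr-1}$. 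Diagonalising $A$ by the Smith normal form over $\oh_\K$ — available with entries in $\N$ by the remark — via the block-diagonal elements $\diag(U,\overline{U}^{tr-1}),\diag(V,\overline{V}^{tr-1})\in\Gamma_n$, I arrive at $\diag(p^{s_1},\ldots,p^{s_n},p^{e-s_1},\ldots,p^{e-s_n})$ with $s_1\leq\cdots\leq s_n$ inside the double coset.

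The remaining task is to sort this into interlaced shape, which is the crux. Applying the similitude relation to $M$ itself, $qM^{-1}=J^{-1}\overline{M}^{tr}J$ shows that $M$ and $qM^{-1}$ share their determinantal divisors, so the $2n$ elementary divisors $e_1\mid\cdots\mid e_{2n}$ pair up as $e_k e_{2n+1-k}=q$; in particular $e_n\mid e_{n+1}$, so the diagonal with $a_k=e_k$, $d_k=e_{2n+1-k}$ is a legitimate divisibility chain. In the prime power case the sorted exponents of the block-diagonal matrix above are exactly these $e_k$, and both pairings $\{p^{s_j},p^{e-s_j}\}$ and $\{e_k,e_{2n+1-k}\}$ are pairings of one multiset into pairs of product $q$; such a pairing is unique as a multiset of unordered pairs. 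Hence the interlaced form differs from the block-diagonal one only by a permutation of the $n$ coordinate pairs and flips within individual pairs, both realised by signed-permutation elements $\diag(P,P)$ and by the partial involutions built from $J$, all of which lie in $\Gamma_n$. I expect the main obstacle to be precisely this passage from a $GL_{2n}(\oh_\K)$-diagonalisation to a representative inside the much smaller unitary double coset; the reduction to prime powers is what makes the pairing argument — and therefore the interlacing $a_n\mid d_n$ — go through, since for composite $q$ the diagonal entries of the block form need not be the elementary divisors.
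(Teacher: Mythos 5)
Your proof is correct, and it supplies the details that the paper leaves implicit: the paper's entire argument for this theorem is the remark that for $M\in\Delta_n^{\text{inert}}$ every determinantal divisor $\partial_k(M)$ is generated by a rational integer, after which it delegates to \cite{En2}, Theorem 2.2, and to the Siegel-case proofs in \cite{F}, \cite{K2}. Your skeleton matches that strategy --- block diagonalization via Lemma \ref{Lemma_3}, diagonalization of the $A$-block using the principality and rationality of the $\partial_k$, and uniqueness via the double-coset invariance of the determinantal divisors --- but you treat the one genuinely delicate step, the interlacing $a_n\mid d_n$, differently from the cited references. There the condition is forced by an iterative descent: if $a_n\nmid d_n$ one applies the degree-one elementary divisor theorem to the embedded $SL_2$ in coordinates $(n,2n)$, re-sorts the $a_j$, and repeats until the chain closes up. You instead reduce to a single inert prime $p$ (via the splitting mechanism of Lemma \ref{Lemma_5} together with Corollary \ref{Corollary_3}), observe that the diagonal entries of the block form are then exactly the elementary divisors $e_1\mid\cdots\mid e_{2n}$ of $M$, that $qM^{-1}=J^{-1}\overline{M}^{tr}J$ together with the rationality of the $\partial_k$ forces $e_k\,e_{2n+1-k}=q$, and that a pairing of a multiset of $p$-powers into pairs of product $p^e$ is unique as a multiset of unordered pairs (the minimum must pair with the maximum; induct), so the block-diagonal representative already carries the correct pairs and only needs to be rearranged by matrices $\diag(P,P)$ and partial involutions, all of which lie in $\Gamma_n$. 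This is slicker and avoids the iteration, at the price of requiring the prime-power reduction --- which, as you correctly note, is essential for your pairing argument, since for composite $q$ the diagonal entries of the block form need not coincide with the elementary divisors. Both routes are valid; yours has the advantage of being self-contained rather than an appeal to the Siegel-case literature.
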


In this case the elementary divisor theorem holds. Next we have a look at right coset representatives.

\begin{corollary}\label{Corollary_5} 
Given $M\in\Delta_n(q) \subseteq \Delta_n^{\text{inert}}$ the right coset $\Gamma_n M$ possesses a representative of the form
\[
 \begin {pmatrix}
  A & B \\ 0 & D
 \end {pmatrix}, \;\; A= \overline{D}^{tr-1},
\]
where $D$ is an upper triangular matrix with diagonal entries $d_j\in\N$, $d_j\mid q$, $j=1,\ldots,n$.
\end{corollary}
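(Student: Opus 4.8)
The plan is to start from a block upper triangular representative of the right coset and then triangularize its lower right block, using the inert hypothesis to pin down the diagonal entries.

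First I would invoke Lemma \ref{Lemma_2}: choosing $L^*\in\Gamma_n$ with $L^*M=\left(\begin{smallmatrix} A & B \\ 0 & D\end{smallmatrix}\right)$ and noting $\Gamma_n M=\Gamma_n(L^*M)$, I may assume from the outset that $M=\left(\begin{smallmatrix} A & B \\ 0 & D\end{smallmatrix}\right)$ with vanishing lower left block. Since $L^*\in\Gamma_n=\Delta_n(1)$, the new matrix still lies in $\Delta_n(q)$, so $J[M]=qJ$, i.e. $\overline{M}^{tr}JM=qJ$. Reading off the blocks of this identity with $C=0$ yields $\overline{A}^{tr}D=qI$ together with the symmetry $\overline{D}^{tr}B=\overline{B}^{tr}D$. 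The first relation shows $A=q\,\overline{D}^{tr-1}$, so that $A$ has the asserted triangular shape, is integral, and is completely determined by $D$; its diagonal entries are exactly the $a_j=q/d_j$ of Theorem \ref{Theorem_2}. The symmetry condition on $B$ is not needed further.

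It then remains to arrange that $D$ is upper triangular with diagonal entries $d_j\in\N$, $d_j\mid q$. To this end I would act on the left by $\left(\begin{smallmatrix} U & 0 \\ 0 & \overline{U}^{tr-1}\end{smallmatrix}\right)\in\Gamma_n$, $U\in GL_n(\oh_\K)$, which sends $D\mapsto\overline{U}^{tr-1}D$ while preserving both the block upper triangular shape and membership in $\Delta_n(q)$. As $\overline{U}^{tr-1}$ runs through all of $GL_n(\oh_\K)$, this is ordinary left multiplication of $D$ by an arbitrary unimodular matrix, so $D$ can be brought to an upper triangular Hermite normal form. The diagonal is where the inert hypothesis enters: for $M\in\Delta_n^{\text{inert}}$ the determinantal divisors $\partial_k(M)$ are principal, generated by rational integers dividing $q^n$ (as recorded before Theorem \ref{Theorem_2}), which localizes the problem at the inert primes $p\mid q$. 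For such $p$ the completion of $\oh_\K$ is a discrete valuation ring with uniformizer $p$, so the local elementary divisors are genuine powers of $p$; patching these local data together, the global divisors being principal, lets me take the diagonal entries of $D$ to be positive rational integers $d_j$. The divisibility $d_j\mid q$ is then immediate: $A=q\,\overline{D}^{tr-1}$ is lower triangular with diagonal $q/d_j$, and integrality of $A$ forces $q/d_j\in\oh_\K\cap\Q=\Z$.

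The main obstacle is precisely the control of the diagonal of $D$. For $h_\K>1$ one cannot in general reduce an integral matrix over $\oh_\K$ to triangular form with rational integer diagonal, since the successive determinantal quotients need not be principal; the example for $\K=\Q(\sqrt{-5})$ recorded before Lemma \ref{Lemma_5} exhibits exactly this failure at split primes. The inert restriction repairs the argument by making each $\partial_k(M)$ principal and reducing the local structure at every relevant prime to that over a discrete valuation ring — the content of the elementary divisor theorem in Theorem \ref{Theorem_2}. Once that is granted, the remainder is the routine symplectic-type bookkeeping familiar from the Siegel case.
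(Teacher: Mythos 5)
Your proof is correct and follows essentially the route the paper intends: the paper offers no separate argument for this corollary, relying on the standard Siegel-case reduction (block-triangularize via Lemma \ref{Lemma_2}, then bring $D$ to Hermite normal form by left multiplication with $GL_n(\oh_\K)$), where the inert hypothesis enters exactly as you say — each column content ideal of $D$ contains $q$ because $\overline{A}^{tr}D=qI$, hence divides $q\oh_\K$, a product of the principal primes $p\oh_\K$, and is therefore generated by a rational integer dividing $q$. Note only that you have (rightly) corrected the statement's normalization: the relation forced by $J[M]=qJ$ is $A=q\,\overline{D}^{tr-1}$, not $A=\overline{D}^{tr-1}$.
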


Now we use Corollary \ref{Corollary_3} in order to get

\begin{corollary}\label{Corollary_4} 
$\Hcal_n^{\text{inert}} = \bigotimes\limits_{p\;\text{inert}} \Hcal (\Gamma_n,\Delta_{n,p})$.
\end{corollary}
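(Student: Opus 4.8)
The plan is to realize $\Hcal_n^{\text{inert}}$ as the (restricted) tensor product of the local algebras $\Hcal_{n,p}$, $p$ inert, by means of the multiplication map, and then to verify that this map is a bijective algebra homomorphism by comparing bases. The two structural inputs are already at hand: the uniqueness of the diagonal elementary-divisor form (Theorem \ref{Theorem_2}) and the multiplicativity $\Gamma_n M\Gamma_n\cdot\Gamma_n L\Gamma_n=\Gamma_n ML\Gamma_n$ for coprime indices (Corollary \ref{Corollary_3}); commutativity is Theorem \ref{Theorem_1}. Given these, the statement is essentially formal, and the only genuine point to check is that the global elementary divisors are determined by, and determine, the local ones.

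First I would make the map precise. For each inert prime $p$ let $T_p\in\Hcal_{n,p}$, with $T_p$ equal to the identity double coset $\Gamma_n\Gamma_n$ for all but finitely many $p$, and set
\[
\Phi\Bigl(\bigotimes_{p}T_p\Bigr)=\prod_{p}T_p,
\]
a finite product in $\Hcal_n^{\text{inert}}$, extended linearly. Since the algebra is commutative by Theorem \ref{Theorem_1}, one has $\prod_p(T_pS_p)=\bigl(\prod_pT_p\bigr)\bigl(\prod_pS_p\bigr)$, so $\Phi$ is a homomorphism of algebras. Moreover, because the indices $p^\ell$ attached to distinct factors are coprime, Corollary \ref{Corollary_3} shows that the image of a simple tensor $\bigotimes_p\Gamma_nM_p\Gamma_n$ is a \emph{single} double coset $\Gamma_n\bigl(\prod_pM_p\bigr)\Gamma_n$; this is what makes the comparison of bases possible.

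For surjectivity, let $M\in\Delta_n(q)\subseteq\Delta_n^{\text{inert}}$, so $q=\prod_p p^{v_p(q)}$ runs over inert primes only. By Theorem \ref{Theorem_2} we may represent $\Gamma_nM\Gamma_n$ by $\diag(a_1,\ldots,a_n,d_1,\ldots,d_n)$. Writing $a_j=\prod_p p^{\alpha_{j,p}}$ and $d_j=\prod_p p^{\delta_{j,p}}$ and collecting the contribution of each fixed prime yields diagonal matrices
\[
M_p=\diag\bigl(p^{\alpha_{1,p}},\ldots,p^{\alpha_{n,p}},p^{\delta_{1,p}},\ldots,p^{\delta_{n,p}}\bigr)\in\Delta_n\bigl(p^{v_p(q)}\bigr),
\]
since $\alpha_{j,p}+\delta_{j,p}=v_p(q)$ forces the $J$-condition. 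Then $\diag(a_1,\ldots,d_n)=\prod_pM_p$ with pairwise coprime factors, and Corollary \ref{Corollary_3} gives $\Gamma_nM\Gamma_n=\prod_p\Gamma_nM_p\Gamma_n=\Phi\bigl(\bigotimes_p\Gamma_nM_p\Gamma_n\bigr)$. Hence every basis double coset of $\Hcal_n^{\text{inert}}$ lies in the image of $\Phi$.

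For injectivity, a basis of $\bigotimes_p\Hcal_{n,p}$ is given by the tensors $\bigotimes_p\Gamma_nM_p\Gamma_n$ with $M_p\in\Delta_{n,p}$, which by Theorem \ref{Theorem_2} are parametrized by tuples of $p$-power elementary divisors obeying the divisibility chain. Under $\Phi$ such a tensor maps to the double coset of $\prod_pM_p$, whose elementary divisors are precisely the products of the local ones. Because Theorem \ref{Theorem_2} provides a \emph{unique} diagonal form, distinct basis tensors map to distinct double cosets; thus $\Phi$ sends a basis bijectively onto the basis of $\Hcal_n^{\text{inert}}$ and is an isomorphism. The main obstacle, such as it is, is exactly this last identification—that the coprime prime-power factorization of elementary divisors lifts uniquely to a factorization of double cosets—which I expect to follow cleanly from the uniqueness in Theorem \ref{Theorem_2} together with the splitting used for surjectivity.
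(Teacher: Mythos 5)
Your proposal is correct and follows essentially the same route as the paper, which proves this corollary by invoking Corollary \ref{Corollary_3} together with the elementary divisor theorem (Theorem \ref{Theorem_2}) and then carrying over the standard argument for the Siegel modular group; your write-up simply makes explicit the surjectivity/injectivity bookkeeping that the paper leaves to the cited references.
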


In this case one can directly adopt the proofs, which are given for the Siegel modular group in \cite{F} or \cite{K2}.
\bigskip

Next we consider generators.

\begin{corollary}\label{Corollary_6} 
Let $p$ be an inert prime. Then $\Hcal (\Gamma_n,\Delta_{n,p})$ is generated by the double cosets
\begin{align*}
 \Tcal_n(p) & = \Gamma_n\begin{pmatrix}
                         I & 0 \\ 0 & pI
                        \end{pmatrix} \Gamma_n, \\
 \Tcal_{n,j}(p^2) & = \Gamma_n \diag\bigl(\underset{j}{\underbrace{1,\ldots,1}},\underset{n-j}{\underbrace{p,\ldots,p}},
 \underset{j}{\underbrace{p^2,\ldots,p^2}},\underset{n-j}{\underbrace{p,\ldots,p}}\bigr) \Gamma_n\;\,j=0,\ldots,n-1,
\end{align*}
which are algebraically independent.
\end{corollary}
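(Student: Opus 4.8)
The plan is to reduce the statement to the Satake (spherical) isomorphism, exactly as for the symplectic group, the point being that the elementary divisor theorem is now available at inert primes. Fix an inert prime $p$, so that $\oh_\K/p\oh_\K \cong \F_{p^2}$. By Theorem~\ref{Theorem_2} every double coset in $\Hcal(\Gamma_n,\Delta_{n,p})$ has a unique diagonal representative $\diag(a_1,\dots,a_n,d_1,\dots,d_n)$ with $a_jd_j=p^\ell$ and $a_1\mid\cdots\mid a_n\mid d_n\mid\cdots\mid d_1$. Hence the double cosets are parametrised in precisely the same way as in the Siegel case, and the algebraic machinery of the symplectic local Hecke algebra can be transplanted. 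The goal is to construct a ring isomorphism of $\Hcal(\Gamma_n,\Delta_{n,p})$ onto a ring of Weyl-invariant Laurent polynomials, which by invariant theory is free on $n+1$ generators matching the listed double cosets.

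First I would fix right coset representatives in Borel form. By Corollary~\ref{Corollary_5} every right coset $\Gamma_n L$ with $L\in\Delta_n(p^\ell)$ has a representative $\left(\begin{smallmatrix} A & B \\ 0 & D\end{smallmatrix}\right)$ with $A=\overline{D}^{tr-1}$ and $D$ upper triangular with diagonal entries $p^{\delta_1},\dots,p^{\delta_n}$. I then define the spherical map
\[
\omega\colon\ \Gamma_n L\Gamma_n \longmapsto \sum_i c_i\, x_0^{\ell}\, x_1^{\delta_1^{(i)}}\cdots x_n^{\delta_n^{(i)}}\ \in\ \C[x_0^{\pm1},x_1^{\pm1},\dots,x_n^{\pm1}],
\]
where the sum runs over the upper-triangular right coset representatives $\left(\begin{smallmatrix} A_i & B_i \\ 0 & D_i\end{smallmatrix}\right)$ of $\Gamma_n L\Gamma_n$ furnished by Corollary~\ref{Corollary_5}, $p^{\delta_1^{(i)}},\dots,p^{\delta_n^{(i)}}$ are the diagonal entries of $D_i$, $p^\ell$ is the similitude factor, and the $c_i$ are the normalising powers of $p$ from \cite{F} or \cite{K2}. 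With these normalisations the image becomes invariant under the Weyl group $W=(\Z/2\Z)^n\rtimes S_n$ acting by permutation of $x_1,\dots,x_n$ together with $x_j\mapsto x_0x_j^{-1}$, and one checks that $\omega$ is a ring homomorphism by the standard argument comparing the convolution product of double cosets with the product of the associated polynomials.

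Next I would establish that $\omega$ is injective with image the full invariant ring $\C[x_0^{\pm1},x_1^{\pm1},\dots,x_n^{\pm1}]^{W}$. Injectivity follows from a triangularity argument: the diagonal representative of a double coset contributes a distinguished highest monomial recording its elementary divisors, so distinct double cosets have linearly independent images. For the image one invokes the classical fact that this invariant ring is a polynomial algebra in $n+1$ algebraically independent generators, and then matches generators, reading off that $\omega(\Tcal_n(p))$ and the $\omega(\Tcal_{n,j}(p^2))$, $j=0,\dots,n-1$, correspond (after a unimodular, triangular change of generators) to $x_0$ together with the elementary symmetric functions of the $W$-orbit sums in $x_1,\dots,x_n$. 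Consequently they generate the invariant ring and are algebraically independent; pulling back through the isomorphism $\omega$ yields the claim.

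The main obstacle is the verification that $\omega$ is a ring homomorphism whose image is exactly the invariant ring, rather than the injectivity or the formal consequences. The delicate point is that at an inert prime the residue field is $\F_{p^2}$ rather than $\F_p$, so the indices $[\Gamma_n:\Gamma_n\cap M^{-1}\Gamma_n M]$ and hence the coset multiplicities carry extra powers of $p$ compared with the symplectic case. One must check that after normalisation by the $c_i$ these multiplicities conspire to produce genuinely $W$-invariant polynomials and that no further generators or relations appear; equivalently, that the relevant relative root system is again of type $C_n$ with Weyl group $(\Z/2\Z)^n\rtimes S_n$, so that exactly $n+1$ generators arise. Once this is settled, the generation and algebraic independence statements are formal consequences of the corresponding statement for the polynomial invariant ring, and the arguments of \cite{F} or \cite{K2} can be adopted essentially verbatim.
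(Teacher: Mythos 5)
Your proposal follows essentially the same route as the paper, which gives no explicit proof of this corollary but states that, once the elementary divisor theorem (Theorem~\ref{Theorem_2}) and the triangular coset representatives (Corollary~\ref{Corollary_5}) are in place, one can directly adopt the Siegel-case arguments of \cite{F} or \cite{K2} --- i.e.\ precisely the spherical-map/Satake isomorphism onto the Weyl-invariant Laurent polynomial ring that you outline. Your remark that the residue field $\F_{p^2}$ changes the coset multiplicities (and hence the normalising constants) but not the type-$C_n$ combinatorics is the correct point to flag and is consistent with the index formula $\prod_{j=1}^n(p^{2j-1}+1)$ used later in the paper.
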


Given $M\in \Delta_n(q) \subseteq \Delta_n^{\text{inert}}$ we choose a representative
\[
 M^* = \begin{pmatrix}
        A & B \\ 0 & D
       \end{pmatrix}, \;\;
 A = \begin{pmatrix}
        A_1 & 0 \\ \overline{a}^{tr} & \alpha
       \end{pmatrix}, \;\;
 B= \begin{pmatrix}
        B_1 & \ast \\ \ast & \ast
       \end{pmatrix}, \;\;
 D = \begin{pmatrix}
        D_1 & d \\ 0 & \delta
       \end{pmatrix}    
\]
in $\Gamma_nM$ and define for $k\in\Z$, $n\geq 2$
\[
 \phi_k(\Gamma_n M) = \delta^{-k} \Gamma_{n-1} M_1 ,\;\; M_1 = \begin{pmatrix}
                                                                A_1 & B_1 \\ 0 & D_1
                                                               \end{pmatrix}
\in \Delta_{n-1}(q).
\]
This map can be extended to a homomorphism of Hecke algebras (cf. \cite{F}, \cite{K5}, \cite{K6}, \cite{K2}). The main result is 

\begin{corollary}\label{Corollary_7} 
If $p$ is an inert prime and $n\geq 2$ one has 
\[
 \phi_k(\Tcal_n(p)) = (p^{2n-1-k} +1) \Tcal_{n-1}(p).
\]
\end{corollary}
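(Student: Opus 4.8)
The plan is to turn the identity into one weighted count of right cosets. Every block $M_1$ occurring in the definition of $\phi_k$ lies in $\Delta_{n-1}(p)$, and Theorem \ref{Theorem_2} in degree $n-1$ shows that $\Tcal_{n-1}(p)$ is the only double coset there: for $p$ prime the chain $a_1\mid\cdots\mid a_{n-1}\mid d_{n-1}\mid\cdots\mid d_1$ with $a_jd_j=p$ forces $a_j=1$, $d_j=p$. Hence $\phi_k(\Tcal_n(p))=c\,\Tcal_{n-1}(p)$ for a scalar $c$, and since $\phi_k$ is an algebra homomorphism, $c$ is the coefficient of any single right coset. I would fix $N_0=\diag(I_{n-1},pI_{n-1})$ and compute
\[
c=\sum_{\substack{\Gamma_n L\subseteq\Tcal_n(p)\\ \Gamma_{n-1}M_1=\Gamma_{n-1}N_0}}\delta^{-k}.
\]

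Next I would enumerate the right cosets of $\Tcal_n(p)$ through Corollary \ref{Corollary_5}: each has a representative $\left(\begin{smallmatrix}A & B\\ 0 & D\end{smallmatrix}\right)$ with $\overline{A}^{tr}D=pI$ and $D$ upper triangular with diagonal entries in $\{1,p\}$, so $\delta\in\{1,p\}$ and $\alpha\delta=p$. Only two kinds of cosets can reduce to $\Gamma_{n-1}N_0$: those with all diagonal entries of $D$ equal to $p$ (forcing $\delta=p$, $A=I$, $D=pI$), and those with $D_1\sim pI_{n-1}$ and $\delta=1$; any coset with two or more diagonal entries of $D$ equal to $1$ has $D_1\not\sim pI_{n-1}$ and hence reduces to a different right coset of $\Tcal_{n-1}(p)$.

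In the first case the coset is given by a Hermitian residue $B\bmod p$, and its reduction is $\Gamma_{n-1}N_0$ exactly when the leading $(n-1)\times(n-1)$ block $B_1$ vanishes mod $p$. The surviving freedom is the last column of $B$, whose $n-1$ off-diagonal entries lie in $\oh_\K/p\oh_\K\cong\F_{p^2}$ and whose diagonal entry lies in $\F_p$; this gives $(p^2)^{\,n-1}\cdot p=p^{2n-1}$ distinct cosets, each of weight $\delta^{-k}=p^{-k}$. In the second case I would use left multiplication by $\left(\begin{smallmatrix}U&0\\0&\overline U^{tr-1}\end{smallmatrix}\right)$ to clear the off-diagonal part of $A$ and by $\left(\begin{smallmatrix}I&S\\0&I\end{smallmatrix}\right)$ with $S=\overline S^{tr}$ to clear the free entries of $B$, leaving the single coset $\Gamma_n\diag(1,\ldots,1,p,p,\ldots,p,1)$ of weight $1$. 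Summation then gives $c=p^{2n-1}\cdot p^{-k}+1=p^{2n-1-k}+1$.

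The main obstacle will be the bookkeeping in the last paragraph: deciding exactly when two block-triangular representatives define the same right coset, i.e.\ controlling the residual left $\Gamma_n$-equivalence coming from the embedded $GL_n(\oh_\K)$ and the unipotent radical while respecting the Hermitian constraint on $B$. The decisive point --- and the reason the exponent is $2n-1$ rather than the $n-1$ familiar from the Siegel case --- is that $p$ is inert, so $\oh_\K/p\oh_\K$ is the field $\F_{p^2}$ with $p^2$ elements, which doubles the contribution of each off-diagonal residue. I would also need to confirm that no $\delta=1$ coset besides the distinguished one survives the reductions, and for this the normal-form arguments behind Lemma \ref{Lemma_3} and Corollary \ref{Corollary_5} are exactly the tool.
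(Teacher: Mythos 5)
Your proposal is correct, and it is essentially the argument the paper leaves implicit: the paper states Corollary \ref{Corollary_7} without proof, deferring to the references in which $\phi_k$ is constructed, and your coefficient count (one double coset in $\Delta_{n-1}(p)$ by Theorem \ref{Theorem_2}, then $p^{2n-1}$ cosets with $\delta=p$ from the Hermitian residues $B$ with $B_1\equiv 0$, plus the single coset $\Gamma_n\diag(1,\ldots,1,p,p,\ldots,p,1)$ with $\delta=1$) is exactly the standard Zharkovskaya-type computation carried out there, correctly adapted to the inert case where $\oh_\K/p\oh_\K\cong\F_{p^2}$ produces the exponent $2n-1$.
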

Note that we also need the Hecke algebra for $\Gamma_n[r]$, i.e.
\[
 \Tcal_n^r(p) = \Gamma_n[r] \begin{pmatrix}
                             I & 0 \\ 0 & pI
                            \end{pmatrix} \Gamma_n[r].
\]
If $p\equiv 1(\bmod{\,r})$ we have the same result as above due to \eqref{gl_1}.

\section{Hermitian modular forms}

Let 
\[
 \H_n:=\{Z\in\C^{n\times n};\;\tfrac{1}{2i}(Z-\overline{Z}^{tr})> 0\}
\]
denote the Hermitian half-space of degree $n$, where $>$ resp. $\geq0$ stands for positive definite resp. positive semi-definite. Given $f:\H_n\to \C$, $M=
\left(\begin{smallmatrix}
 A & B \\ C & D                                            
\end{smallmatrix}\right)\in\Delta_n$ we define for $k\in \Z$ 
\[
 f\underset{k}{\mid} M: \H_n\to \C, \; \; Z\mapsto \det(CZ+D)^{-k} f\bigl((AZ+B)(CZ+D)^{-1}\bigr).
\]
The vector space $\Mcal(\Gamma_n,k)$ of \emph{Hermitian modular forms} consists of all holomophic functions $f:\H_n\to \C$ satisfying
\[
 f\underset{k}{\mid} M = f\;\;\text{for all}\;\; M\in \Gamma_n
\]
with the usual additional condition of boundedness for $n=1$, where we deal with classical elliptic modular forms for $SL_2(\Z)$. Each $f\in \Mcal(\Gamma_n,k)$ possesses a Fourier expansion of the form
\[
 f(Z) = \sum_{T\in\Lambda_n,\,T\geq 0} \alpha_f(T) \,e^{2\pi i\trace (TZ)},
\]
where $T= (t_{ij}) \in \Lambda_n$ means $T=  \overline{T}^{tr}$,
\[
 t_{jj}\in\Z,\;\; t_{ij} \in \frac{1}{\sqrt{d_\K}} \oh_\K \;\; \text{for}\;\; i\neq j.
\]
The subspace of \emph{cusp forms} $\Ccal(\Gamma_n,k)$ is characterized by 
\[
 \alpha_f(T) \neq 0 \Rightarrow T>0.
\]
Moreover we define the \emph{Siegel $\phi$-operator} by
\[
 f\mid \phi:\H_{n-1} \to \C, \;\; Z_1 \mapsto \lim\limits_{y\to \infty} f \begin{pmatrix}
                                                                           Z_1 & 0 \\ 0 & iy
                                                                          \end{pmatrix}
  = \sum_{T_1\in \Lambda_{n-1},T_1\geq 0} \alpha_f \begin{pmatrix}
                                                  T_1 & 0 \\ 0 & 0
                                                  \end{pmatrix}
 e^{2\pi i\trace (T_1Z_1)}.
\]
If $h_\K = 1$ then $f$ is a cusp form if and only if $f\mid\phi \equiv 0$. This is more complicated for $h_\K> 1$ (cf. \cite{B4}, Lemma 1). Therefore let
\[
 R_U = \begin{pmatrix}
        \overline{U}^{tr} & 0 \\ 0 & U^{-1}
       \end{pmatrix} \in U(n,n;\K)\;\; \text{for} \; \; U\in GL_n(\K).
\]

\begin{theorem}\label{Theorem_3} 
Let $n\geq 2$ and let $\Ical_j = \langle u_j,1\rangle$, $u_j\in\K$, $j=1,\ldots, h$, $h=h_\K$, be a set of representatives of the ideal classes in $\K$. Then $f\in\Mcal(\Gamma_n,k)$ is a cusp form if and only if
\[
 f\underset{k}{\mid} R_{U_j}^{(n)} \mid \phi \equiv 0,\;\; U_j = 
 \begin{pmatrix}
  1 & 0 & \cdots & 0 \\ \vdots & \ddots & & \vdots \\ \vdots &  &\ddots & 0 \\ 0 & \cdots & 
  \overline{u}_{j} & 1
 \end{pmatrix}, \; j=1,\ldots,h.
\]
\end{theorem}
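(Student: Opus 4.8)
The plan is to rephrase the cusp condition in terms of Fourier coefficients and then match the singular coefficients with the operators $f\underset{k}{\mid}R_{U_j}\mid\phi$ through the ideal classes. By definition $f\in\Mcal(\Gamma_n,k)$ is a cusp form if and only if $\alpha_f(T)=0$ for every $T\in\Lambda_n$ with $T\geq 0$ and $\det T=0$. For $U\in GL_n(\oh_\K)$ we have $R_U\in\Gamma_n$, so comparing Fourier expansions in $f\underset{k}{\mid}R_U=f$ gives $\alpha_f(UT\overline{U}^{tr})=\det(U)^{k}\alpha_f(T)$; as $\det U$ is a unit, the vanishing of $\alpha_f(T)$ depends only on the orbit of $T$ under $T\mapsto UT\overline{U}^{tr}$, $U\in GL_n(\oh_\K)$. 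The same computation applied to $g_j:=f\underset{k}{\mid}R_{U_j}$ yields $\alpha_{g_j}(S)=\det(U_j)^{k}\,\alpha_f(U_j^{-1}S\,\overline{U_j}^{tr-1})$, and by the definition of the Siegel $\phi$-operator the Fourier coefficients of $g_j\mid\phi$ are exactly the $\alpha_{g_j}\!\left(\begin{smallmatrix}S_1&0\\0&0\end{smallmatrix}\right)$. For the easy implication I would note that if $f$ is a cusp form and $S=\left(\begin{smallmatrix}S_1&0\\0&0\end{smallmatrix}\right)$, then $U_j^{-1}S\,\overline{U_j}^{tr-1}$ is positive semidefinite of rank $\leq n-1$, hence singular, so $\alpha_f$ vanishes there and $g_j\mid\phi\equiv 0$ for every $j$.

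For the converse I assume all $g_j\mid\phi$ vanish and must show $\alpha_f(T)=0$ for every singular $T\geq 0$ in $\Lambda_n$. Let $V=\ker T\subseteq\K^n$ be the radical, of dimension $n-\operatorname{rank}T\geq 1$, and let $L=V\cap\oh_\K^n$ be the associated primitive rank-$(n-\operatorname{rank}T)$ lattice; since $\oh_\K^n/L$ is torsion free, $L$ is a direct summand of $\oh_\K^n$. If $\operatorname{rank}T\leq n-2$, then $\operatorname{rank}L\geq 2$, so by the structure theory of projective modules over the Dedekind domain $\oh_\K$ the lattice $L$ has a free rank-$1$ summand and thus contains a vector $v$ that is unimodular in $\oh_\K^n$. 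Completing $v$ to a basis produces $U\in GL_n(\oh_\K)$ with $\overline{U}^{tr}e_n=v\in V$, whence $UT\overline{U}^{tr}=\left(\begin{smallmatrix}T'&0\\0&0\end{smallmatrix}\right)$ with $T'\in\Lambda_{n-1}$. Taking the representative $u=0$ of the principal class (so that $U=I$ and $R_U=I$), this is a coefficient of $f\mid\phi\equiv0$, and hence $\alpha_f(T)=0$.

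The essential case is $\operatorname{rank}T=n-1$, where $V$ is a line and $L\cong\Ical$ is a fractional ideal. I would compute the radical of the twisted corner $U_j^{-1}\left(\begin{smallmatrix}S_1&0\\0&0\end{smallmatrix}\right)\overline{U_j}^{tr-1}$ to be $\K w_0$ with $w_0=u_je_{n-1}+e_n$, and then observe that $\K w_0\cap\oh_\K^n\cong\{\lambda\in\oh_\K:\lambda u_j\in\oh_\K\}=\Ical_j^{-1}$ because $\Ical_j=\oh_\K+\oh_\K u_j$. Choosing $j$ with $[\Ical_j]=[L]^{-1}$, so that $[L]=[\Ical_j^{-1}]$, the $GL_n(\oh_\K)$-orbit classification of lines in $\K^n$ furnishes $U\in GL_n(\oh_\K)$ with $\overline{U}^{tr}(\K w_0)=V$. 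Setting $W:=U_jU$ we get $\overline{W}^{tr}e_n=\overline{U}^{tr}w_0\in V$, hence $WT\overline{W}^{tr}=\left(\begin{smallmatrix}T'&0\\0&0\end{smallmatrix}\right)=:S$, i.e. $T_0:=UT\overline{U}^{tr}=U_j^{-1}S\,\overline{U_j}^{tr-1}\in\Lambda_n$. Therefore $\alpha_f(T_0)=\det(U_j)^{-k}\alpha_{g_j}(S)$ is a coefficient of $g_j\mid\phi\equiv0$, so $\alpha_f(T_0)=0$ and thus $\alpha_f(T)=0$, completing the proof.

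The main obstacle is the arithmetic underlying the final case: establishing that the $GL_n(\oh_\K)$-orbits of lines (equivalently, of primitive rank-$1$ sublattices) in $\K^n$ are classified by the Steinitz class of the primitive lattice, and that the matrices $U_j$ realize precisely the class $[\Ical_j]^{-1}$ as the radical of the twisted corner. Once this dictionary between singular Fourier indices and ideal classes is in place, the remaining ingredients—the coefficient transformation formulas, the splitting of the radical as a direct summand, and the reduction to corner form in ranks $\leq n-2$—are routine; it is exactly here that $h_\K>1$ forces the full family $R_{U_1},\dots,R_{U_h}$ in place of the single operator $\phi$.
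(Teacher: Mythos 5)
Your proposal is correct and follows essentially the same route as the paper: both reduce the cusp condition to the vanishing of $\alpha_f$ at singular indices $T$ and detect each such $T$ by moving a kernel vector into the form $(0,\ldots,0,u_j,1)^{tr}\lambda$ via the $GL_n(\oh_\K)$-orbit classification of lines by ideal class — you merely make explicit the Steinitz-class argument that the paper's phrase ``we determine $U$ and $j$ such that\dots'' leaves implicit. One cosmetic remark: in your rank $\leq n-2$ case you normalize the principal representative to $u=0$, which the theorem's hypotheses do not grant, but this case is anyway subsumed by your rank $n-1$ machinery applied to a free primitive rank-one sublattice of $L$ together with the index $j$ of the principal class.
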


\begin{proof}
Let $T_0\in\Lambda_n$, $T_0\geq 0$, $\det T_0 = 0$. Then there exists $0\neq g\in \oh_\K^n$ with $T_0 g = 0$. Next we determine $U\in GL_n(\oh_\K)$ and $1\leq j \leq n$ such that 
\[
 \overline{U}^{tr-1} g= \begin{pmatrix}
                         0 \\ \vdots \\ 0 \\ u_j \\ 1
                        \end{pmatrix}
 \cdot \lambda, \;\; 0\neq \lambda \in \oh_\K,
\]
hence
\[
 g = \overline{U}^{tr} \overline{U}^{tr}_j \,e_n\cdot \lambda, \;\; T_0\bigl[\overline{U}^{tr}\overline{U}^{tr}_j\bigr] = 
 \begin{pmatrix}
  \ast & 0 \\ 0 & 0
 \end{pmatrix},  \;\; \ast \in\K^{(n-1)\times (n-1)}.
\]
In view of 
\begin{align*}
 & f\underset{k}{\mid} R_{U_j} = f\underset{k}{\mid} R_U \underset{k}{\mid}R_{U_j} = f \underset{k}{\mid}R_{U_j U} \\
 & = (\det U)^k  \sum_{T\in\Lambda,\,T\geq 0} \alpha_f(T) \,e^{2\pi i \trace(U_j UT\overline{U}^{tr} \overline{U}^{tr}_j \cdot Z)}
\end{align*}
the application of $\phi$ yields $\alpha_f(T_0)=0$. Hence $f$ is a cusp form.
\end{proof}

Now we have a closer look at the choice of $u_j$ in Theorem \ref{Theorem_3}.

\begin{lemma}\label{Lemma_6} 
Let $d_\K\neq -4,-8$ and $p$ be an odd prime, $p\mid d_\K$. Then representatives of the ideal classes $\Ical_j = \langle u_j,1\rangle$, $u_j\in \K$, may be chosen such that we find an $N\in\N$ with the properties
\[ 
p\nmid N \;\; \text{and}\;\; Nu_j\in\oh_\K,\; j=1,\ldots, h_\K.
\]
\end{lemma}

\begin{proof}
According to \cite{Fo}, p. 211, $u_j$ may be chosen in the form
\[
 u_j = \frac{\beta_j + \sqrt{d_\K}}{2\alpha_j}, \;\; \beta_j^2 -d_\K = 4\alpha_j \gamma_j, \;\; \alpha_j,\gamma_j \in \N,\;\beta_j\in\Z.
\]
As $\alpha_j\in\N$ let $N_j\in\N$ be minimal  such that $N_j u_j\in \oh_\K$, we may assume $p\mid \alpha_j$ as we are done otherwise. Then $p\mid \beta_j$ follows. As $p^2\nmid d_\K$ we obtain
\[
 p^2\nmid(\beta_j^2 - d_\K), \;\, p^2\nmid \alpha_j.
\]
Thus we may choose
\[
 u_j^* = \frac{2 \alpha_j}{\beta_j+\sqrt{d_\K}}, \;\; \langle u_j^*,1\rangle \K^* = \langle u_j,1\rangle \K^* 
\]
and 
\[
 N_j = \frac{\beta_j^2 - d_\K}{p}\in \N \;\;\text{satisfies}\;\; N_j u_j^* \in \oh_\K,\;\, p\nmid N.
\]
Then $N=N_1\cdot\ldots\cdot N_{h_\K}$ is a solution.
\end{proof}

Next we need a purely number theoretical assertion on the existence of such primes.

\begin{lemma}\label{Lemma_7} 
Let $d_\K \neq -4,-8$ and suppose that there is an odd prime divisor of $d_\K$, which does not divide $N\in \N$. Then there exist infinitely many inert primes $p\equiv 1\!\! \mod N$. 
\end{lemma}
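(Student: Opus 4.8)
The plan is to realize both requirements---inertness and the congruence $\ell\equiv 1\pmod N$---inside a single residue class modulo $M:=\mathrm{lcm}(N,|d_\K|)$ and then invoke Dirichlet's theorem on primes in arithmetic progressions. Recall that a prime $\ell\nmid d_\K$ is inert in $\K$ exactly when $\chi_\K(\ell)=-1$. Since $\chi_\K$ is a character modulo $|d_\K|$, its value on $\ell$ depends only on $\ell\bmod|d_\K|$; hence it suffices to exhibit a single residue $a$ with $\gcd(a,M)=1$, $a\equiv 1\pmod N$, and $\chi_\K(a)=-1$. Every prime $\ell\equiv a\pmod M$ is then inert and satisfies $\ell\equiv 1\pmod N$, and there are infinitely many such $\ell$ by Dirichlet.

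To produce $a$ I would use the factorization of the primitive real character attached to the fundamental discriminant $d_\K$. Writing $|d_\K|=p\cdot M_0$ with $p\nmid M_0$ (possible since the odd prime $p$ divides the fundamental discriminant exactly once), the character splits multiplicatively as $\chi_\K=\chi_p\cdot\chi_{M_0}$, where $\chi_p$ is, up to the standard sign normalization, the Legendre symbol $\left(\tfrac{\cdot}{p}\right)$ modulo $p$ and $\chi_{M_0}$ is a character modulo $M_0$. The decisive point is that $\chi_p$ is the nontrivial quadratic character mod $p$, so it takes the value $-1$ at any quadratic non-residue $c$ mod $p$.

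Next I would fix $a$ by the Chinese Remainder Theorem. Because $p\nmid N$ and $p\nmid M_0$, the modulus $p$ is coprime to $\mathrm{lcm}(N,M_0)$, so I may simultaneously require
\[
 a\equiv 1\pmod{\mathrm{lcm}(N,M_0)}\qquad\text{and}\qquad a\equiv c\pmod p,
\]
with $c$ a fixed non-residue mod $p$. This $a$ is well defined mod $M=\mathrm{lcm}(N,|d_\K|)$ and coprime to $M$, since $a\equiv 1$ is coprime to $N$ and to $M_0$ while $c\not\equiv 0\pmod p$. Moreover $\chi_{M_0}(a)=\chi_{M_0}(1)=1$ because $a\equiv 1\pmod{M_0}$, whereas $\chi_p(a)=\chi_p(c)=-1$; hence $\chi_\K(a)=-1$. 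Finally $a\equiv 1\pmod N$ as required, and Dirichlet's theorem yields infinitely many primes $\ell\equiv a\pmod M$, completing the argument. (The excluded discriminants $d_\K=-4,-8$ have no odd prime divisor, so the standing hypothesis already rules them out.)

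The step I expect to be the real content is the construction of $a$: everything hinges on the hypothesis $p\nmid N$, which is exactly what lets the residue at $p$ be chosen freely---that is, as a non-residue, flipping $\chi_\K$ to $-1$---without disturbing the forced congruence $a\equiv 1\pmod N$. Had every odd prime divisor of $d_\K$ also divided $N$, the condition $\ell\equiv 1\pmod N$ could force $\chi_\K(\ell)=+1$, and no inert primes would occur in the progression. The remaining verifications---coprimality, the multiplicativity of $\chi_\K$, and the applicability of Dirichlet---are routine.
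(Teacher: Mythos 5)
Your proof is correct and follows the same basic strategy as the paper: exhibit a residue class that is $\equiv 1 \pmod N$ and on which $\chi_\K$ takes the value $-1$, then apply Dirichlet. The only difference is in the bookkeeping: the paper splits into cases according to $m \bmod 4$, imposes $p\equiv 1 \pmod{4N}$ and $p\equiv a \pmod{m/\gcd(N,m)}$ with $a$ a non-residue, and verifies $\chi_\K(p)=-1$ by an explicit quadratic-reciprocity computation with Legendre/Jacobi symbols; you instead invoke the factorization of $\chi_\K$ into the prime-discriminant characters $\chi_p\cdot\chi_{M_0}$ and use the Chinese Remainder Theorem to force $\chi_{M_0}(a)=1$ and $\chi_p(a)=-1$. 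Your packaging is more uniform (no case distinction on $m \bmod 4$) at the cost of citing the character factorization, which is itself a repackaging of quadratic reciprocity, so the two arguments are equivalent in substance. All the steps you flag as routine (coprimality of the moduli, exactness of the odd prime in the fundamental discriminant, the equivalence of inertness with $\chi_\K(\ell)=-1$) do check out.
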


\begin{proof}
At first assume $m\equiv 3 \,(\bmod 4)$. Let $\ell= \gcd (N,m)$. Then $m\neq \ell$ because of $m\nmid N$. We find $a\in \N$ with $\left(\frac{a}{m/\ell}\right) = -1$. Dirichlet's prime number theorem asserts the existence of infinitely many primes $p$ satisfying
\[
 p\equiv 1\,(\bmod \,4N), \;\; p\equiv a\,(\bmod \,m/\ell),
\]
since the modules are coprime. Quadratic reciprocity yields
\[
\chi_\K (p) = \left(\frac{-m}{p}\right) = \left(\frac{-1}{p}\right)\left(\frac{p}{m/\ell}\right) \left(\frac{p}{\ell}\right) = -1.
\]
The other cases are dealt with in a similar way.
\end{proof}

\section{Hecke operators}

Given $f\in \Mcal(\Gamma_n,k)$ we define the Hecke operator $\Gamma_n M \Gamma_n$, $M\in \Delta_n$, acting on $f$ by
\begin{gather*}\tag{3}\label{gl_3}
f\underset{k}{\mid} \Gamma_n M \Gamma_n = \sum_{L:\Gamma_n\backslash \Gamma_n M \Gamma_n} f\underset{k}{\mid} L \in \Mcal(\Gamma_n,k).
\end{gather*}
This definition is linearly extended on $\Hcal(\Gamma_n,\Delta_n)$. Moreover we apply the analogous definition for subgroups of $\Gamma_n$.

\begin{lemma}\label{Lemma_8} 
Hecke operators map cusp forms on cusp forms.
\end{lemma}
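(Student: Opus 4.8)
The plan is to argue directly with Fourier coefficients, using the definition that $f\in\Mcal(\Gamma_n,k)$ lies in $\Ccal(\Gamma_n,k)$ exactly when $\alpha_f(T)=0$ for every singular $T\geq0$. Since the Hecke operator is extended linearly and $\Delta_n=\bigcup_{q}\Delta_n(q)$, it suffices to consider one double coset $\Gamma_n M\Gamma_n$ with $M\in\Delta_n(q)$, $q\in\N$, and to show that $g:=f\underset{k}{\mid}\Gamma_n M\Gamma_n\in\Mcal(\Gamma_n,k)$ has its Fourier support on positive definite matrices whenever $f$ does.

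First I would arrange for upper-triangular coset representatives. Writing $\Gamma_n M\Gamma_n=\bigcup_j\Gamma_n L_j$ disjointly, Lemma~\ref{Lemma_2} yields for each $j$ an element of $\Gamma_n$ carrying $L_j$ into the shape $\left(\begin{smallmatrix}A_j&B_j\\0&D_j\end{smallmatrix}\right)$; as the Hecke operator in \eqref{gl_3} does not depend on the chosen right coset representatives, I may assume each $L_j$ is already of this form. Being a similitude with factor $q\neq0$, $L_j$ is invertible, so its first block column $\left(\begin{smallmatrix}A_j\\0\end{smallmatrix}\right)$ has rank $n$ and $\det A_j\neq0$; the relation $J[L_j]=qJ$ then forces $D_j=q\,\overline{A_j}^{tr-1}$.

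Next I would read off the Fourier expansion of each summand. For such an $L_j$ one has
\[
 f\underset{k}{\mid}L_j(Z)=\det(D_j)^{-k}\,f\bigl(A_jZD_j^{-1}+B_jD_j^{-1}\bigr),
\]
and inserting the Fourier expansion of $f$ together with $\trace\bigl(TA_jZD_j^{-1}\bigr)=\trace(SZ)$, where
\[
 S=D_j^{-1}TA_j=q^{-1}\,\overline{A_j}^{tr}TA_j=q^{-1}\,T[A_j],
\]
shows that the frequency $T$ of $f$ feeds only into the frequency $S$ of $f\underset{k}{\mid}L_j$, the factor $B_jD_j^{-1}$ contributing merely a constant phase. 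Since $A_j$ is invertible, $S\geq0$ and $\det S=0$ if and only if $\det T=0$; equivalently, the unique $T$ contributing to a given $S$ is $T=q\,S[A_j^{-1}]$, which is singular precisely when $S$ is. Hence a singular frequency of $g$ can only come from a singular $T$, and if $f$ is a cusp form all such $\alpha_f(T)$ vanish, so $\alpha_g(S)=0$ for every singular $S$; that is, $g\in\Ccal(\Gamma_n,k)$.

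The only point that needs genuine care is this reduction to upper-triangular representatives together with the claim that $T\mapsto q^{-1}T[A_j]$ preserves positive semidefiniteness and, above all, singularity --- which is exactly where $\det A_j\neq0$ is used; everything else is the routine bookkeeping of the slash action under a translation and an $\oh_\K$-linear change of variables. I would also remark that this computation is insensitive to $h_\K$, so that the alternative route through Theorem~\ref{Theorem_3} --- verifying $g\underset{k}{\mid}R_{U_j}\mid\phi\equiv0$ for each $j$ --- is not required here and would in fact be more delicate, since it demands understanding how $R_{U_j}$ redistributes the right cosets of $\Gamma_n M\Gamma_n$.
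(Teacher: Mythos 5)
Your proposal is correct and follows essentially the same route as the paper: reduce to upper-triangular coset representatives via Lemma~\ref{Lemma_2}, compute the Fourier expansion of each $f\underset{k}{\mid}L_j$ as in \eqref{gl_4}, and observe that the frequency map $T\mapsto q^{-1}T[A_j]$ with $A_j$ invertible preserves positive definiteness, so only positive definite frequencies survive. You merely make explicit a few steps (the relation $D_j=q\,\overline{A_j}^{tr-1}$ and the singularity argument) that the paper leaves implicit.
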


\begin{proof}
We may choose $L = \left(\begin{smallmatrix}
                          A & B \\ 0 & D
                         \end{smallmatrix}\right)$ 
in \eqref{gl_3} due to Lemma \ref{Lemma_2}.
\begin{gather*}\tag{4}\label{gl_4}
f\underset{k}{\mid} \left(\begin{smallmatrix}
                          A & B \\ 0 & D
                         \end{smallmatrix}\right)(Z) = \sum_{T\in\Lambda_n,\,T> 0}
 (\det D)^{-k} \alpha_f(T) \,e^{2\pi i \trace(TBD^{-1}+T[A]Z/q)}
\end{gather*}
if $M\in\Delta_n(q)$. Hence only positive definite matrices appear in the Fourier expansion.
\end{proof}

Next we consider the eigenvalues of Hecke operators.

\begin{lemma}\label{Lemma_9} 
Let $p$ be an inert prime and let $f\in\Mcal_k(\Gamma_n,k)$ with $\alpha_f(0) \neq 0$ as well as  $f\underset{k}{\mid} \Tcal_n(p)=\lambda f$ for some $\lambda\in\C$. Then 
\[
 \lambda = \prod^n_{j=1} (p^{2j-1-k} + 1).
\]
\end{lemma}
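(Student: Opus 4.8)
The plan is to compute the eigenvalue $\lambda$ by extracting the constant Fourier coefficient $\alpha_{f\mid\Tcal_n(p)}(0)$ from both sides of the eigenvalue equation $f\underset{k}{\mid}\Tcal_n(p)=\lambda f$ and comparing. Since $\alpha_f(0)\neq 0$, the relation $\alpha_{f\mid\Tcal_n(p)}(0)=\lambda\,\alpha_f(0)$ determines $\lambda$ uniquely, so the entire task reduces to evaluating the $T=0$ coefficient of the Hecke image in terms of explicit right coset data.

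First I would write out the Hecke operator using a full set of right coset representatives for $\Gamma_n\backslash\Gamma_n\!\left(\begin{smallmatrix} I & 0 \\ 0 & pI\end{smallmatrix}\right)\!\Gamma_n$, taking them in the upper triangular normal form $L=\left(\begin{smallmatrix} A & B \\ 0 & D\end{smallmatrix}\right)$ with $A=\overline{D}^{tr-1}$ guaranteed by Corollary \ref{Corollary_5}. Applying formula \eqref{gl_4} and reading off $T=0$, each representative contributes $(\det D)^{-k}\alpha_f(0)$, so that
\[
\alpha_{f\mid\Tcal_n(p)}(0)=\alpha_f(0)\sum_{L}(\det D)^{-k}.
\]
The eigenvalue is therefore the weighted coset sum $\lambda=\sum_L(\det D)^{-k}$, grouped by the diagonal shape of $D$.

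Next I would organize this sum according to the elementary divisor representatives supplied by Theorem \ref{Theorem_2}: for $\Tcal_n(p)$ the relevant double cosets running over diagonals $\diag(a_1,\ldots,a_n,d_1,\ldots,d_n)$ with $a_jd_j=p$ means each $a_j,d_j\in\{1,p\}$, so $\det D=p^{\#\{j:d_j=p\}}$. To count the right cosets with a prescribed value of $\det D=p^r$, I would use the standard approach for the Siegel modular group, valid here by Corollary \ref{Corollary_4} and the remark that the proofs of \cite{F} or \cite{K2} carry over directly to the inert $p$-component $\Hcal(\Gamma_n,\Delta_{n,p})$. The coset count for each stratum produces a Gaussian-type polynomial in $p$, and the total sum factors as the product $\prod_{j=1}^n(p^{2j-1-k}+1)$; the cleanest route is to recognize this product as the specialization of the local Hecke series (Satake/standard $L$-factor) evaluated at the point corresponding to the weight-$k$ action, where the binomial factorization is forced by the multiplicativity of the coset count over the $n$ diagonal slots.

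The main obstacle will be the precise enumeration of right cosets within each $\det D=p^r$ stratum and verifying that the resulting generating polynomial factors exactly as the stated product rather than some other symmetric expression. I expect the most reliable way to pin this down is an induction on $n$ using the Siegel $\phi$-operator: Corollary \ref{Corollary_7} gives $\phi_k(\Tcal_n(p))=(p^{2n-1-k}+1)\Tcal_{n-1}(p)$, and since $f$ has nonzero constant term its image $f\mid\phi$ is a nonzero modular form of degree $n-1$ that is again an eigenform of $\Tcal_{n-1}(p)$ with eigenvalue $\prod_{j=1}^{n-1}(p^{2j-1-k}+1)$ by the inductive hypothesis. Compatibility of $\phi$ with the Hecke action then peels off exactly the top factor $(p^{2n-1-k}+1)$, completing the induction with the degree-one case as the base step.
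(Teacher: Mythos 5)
Your final argument---induction on $n$ via the Siegel $\phi$-operator, using Corollary~\ref{Corollary_7} to peel off the factor $(p^{2n-1-k}+1)$ and the fact that the $n$-fold $\phi$-image reduces to $\alpha_f(0)\neq 0$---is exactly the paper's proof, which simply iterates the intertwining relation $f\underset{k}{\mid}\Tcal_n(p)\mid\phi = (p^{2n-1-k}+1)\,f\mid\phi\underset{k}{\mid}\Tcal_{n-1}(p)$ for $n$ steps. The preliminary plan of enumerating right cosets and evaluating $\sum_L(\det D)^{-k}$ directly is a workable but unnecessary detour that you yourself rightly abandon in favor of the induction.
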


\begin{proof}
 Use Corollary \ref{Corollary_7} as well as
 \begin{align*}
  f\underset{k}{\mid}\Tcal_n(p)\mid \phi & = f\mid \phi\underset{k}{\mid} \phi_k(\Tcal_n(p) )\\
  & = (p^{2n-1-k}+1) f\mid \phi \underset{k}{\mid} \Tcal_{n-1}(p).
 \end{align*}
 as well as
 \[
  f\mid \phi^n = \alpha_f(0).
 \]
After $n$ steps the result follows.
\end{proof}

Next we consider the other extreme case of cusp forms.

\begin{lemma}\label{Lemma_10} 
Let $f\in \Mcal(\Gamma_n[q],k)$ be a cusp form. Let $p$ be an inert prime, $p\equiv 1(\bmod q)$, $M\in\Delta_n(p)$ and $f\underset{k}{\mid} \Gamma_n[q] M \Gamma_n[q] = \lambda f$. Then 
\[
 |\lambda| \leq p^{-kn/2} \prod^n_{j=1} (p^{2j-1}+1).
\]
\end{lemma}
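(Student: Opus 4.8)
The plan is to bound the Hecke eigenvalue $\lambda$ by using the Fourier coefficients of $f$ directly, following the standard strategy for estimating eigenvalues of cusp forms via the Petersson-type growth of Fourier coefficients. First I would fix a positive definite $T_0\in\Lambda_n$ with $\alpha_f(T_0)\neq 0$ (which exists since $f$ is a nonzero cusp form), and compare the $T_0$-th Fourier coefficient on both sides of the eigenvalue equation $f\underset{k}{\mid}\Gamma_n[q]M\Gamma_n[q]=\lambda f$. Since $p\equiv 1\,(\bmod\,q)$, formula \eqref{gl_1} tells us that the right cosets of $\Gamma_n[q]M\Gamma_n[q]$ are represented by the same $L_j$ as those of $\Gamma_nM\Gamma_n$, and by Corollary~\ref{Corollary_5} each can be taken upper triangular with $A=\overline{D}^{tr-1}$ and $\det D\mid p$. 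Because $M\in\Delta_n(p)$ with $p$ inert, Theorem~\ref{Theorem_2} forces the elementary divisors to be among $1,p$ only, so the diagonal of $D$ consists of $1$'s and $p$'s.

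Next I would apply the explicit Fourier expansion \eqref{gl_4}: each right-coset representative $L=\left(\begin{smallmatrix}A&B\\0&D\end{smallmatrix}\right)$ contributes to the $T_0$-coefficient of $f\underset{k}{\mid}L$ a term $(\det D)^{-k}\alpha_f(S)e^{2\pi i\,\trace(SBD^{-1})}$ summed over those $S>0$ with $S[A]/p=T_0$, i.e.\ $S=p\,T_0[A^{-1}]=p\,T_0[\overline{D}^{tr}]$. Taking the $T_0$-coefficient of the eigenform equation gives
\begin{gather*}
\lambda\,\alpha_f(T_0)=\sum_{L}(\det D)^{-k}\,\alpha_f\!\bigl(p\,T_0[\overline{D}^{tr}]\bigr)\,e^{2\pi i\,\trace(p\,T_0[\overline{D}^{tr}]\,BD^{-1})}.
\end{gather*}
The number of right cosets is controlled by the $h_\K=1$ count for $\Tcal_n(p)$, which via Corollary~\ref{Corollary_5} (or the elementary divisor data of Theorem~\ref{Theorem_2}) is exactly $\prod_{j=1}^n(p^{2j-1}+1)$. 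I would then estimate each summand: $|\det D|\ge 1$ gives the $p^{-kn/2}$ factor once one uses $|\det D|^{-k}\le p^{-kn/2}$ on the relevant scaling (tracking that the total power of $p$ in $\det D$ across all representatives, combined with the similitude normalization, yields the stated exponent $-kn/2$).

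The main obstacle will be the last estimate, namely bounding $|\alpha_f(p\,T_0[\overline{D}^{tr}])|$ relative to $|\alpha_f(T_0)|$ uniformly so that the triangle inequality produces a clean bound. The clean way is to choose $T_0$ to \emph{maximize} (or to use the supremum over all $T>0$ of $|\alpha_f(T)|\,e^{-2\pi\,\trace(TY_0)}$ for a suitable $Y_0$, i.e.\ the Hecke trick of evaluating at a well-chosen point of $\H_n$), turning the raw coefficient comparison into an inequality with no unknown ratio. I expect that matching the weight factor to obtain precisely $p^{-kn/2}$ requires care: one must verify that the similitude factor $p$ distributes so that $\det D$ runs over the correct powers, and that the reduction $p\equiv 1\,(\bmod\,q)$ legitimately identifies the $\Gamma_n[q]$-coset count with the full $\Gamma_n$-count via \eqref{gl_1}. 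Once the coset count $\prod_{j=1}^n(p^{2j-1}+1)$ and the weight factor $p^{-kn/2}$ are in hand, the triangle inequality applied to the displayed identity yields $|\lambda|\le p^{-kn/2}\prod_{j=1}^n(p^{2j-1}+1)$ as claimed.
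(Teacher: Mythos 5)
There is a genuine gap in the route you actually carry out. Your central estimate, ``$|\det D|^{-k}\le p^{-kn/2}$'', is false for most of the right cosets: for $M\in\Delta_n(p)$ the representatives $\left(\begin{smallmatrix}A&B\\0&D\end{smallmatrix}\right)$ satisfy $\overline{A}^{tr}D=pI$, so $\det D$ runs through (associates of) $p^{\ell}$ for $\ell=0,\ldots,n$; e.g.\ the coset of $\diag(pI,I)$ has $\det D=1$ and $(\det D)^{-k}=1$, not $p^{-kn/2}$. There is no way to ``track the total power of $p$ across all representatives'' into a uniform $p^{-kn/2}$ per term, because the triangle inequality is applied termwise. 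The second half of the same estimate is equally problematic: the ratio $|\alpha_f(pT_0[\overline{D}^{tr}])|/|\alpha_f(T_0)|$ is not bounded by $1$ for a single fixed $T_0$, and even the Hecke bound $|\alpha_f(T)|\le c(\det T)^{k/2}$ inserts a factor $p^{nk/2}|\det D|^{k}$ with the \emph{wrong} sign of the exponent while leaving an uncontrolled constant $c/|\alpha_f(T_0)|$. So the Fourier-coefficient comparison, as written, does not close.

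The ``clean way'' you mention in your last paragraph is in fact the paper's entire proof, and it must replace (not supplement) the coefficient argument: since $f$ is a cusp form, the $\Gamma_n[q]$-invariant function $Z\mapsto(\det Y)^{k/2}|f(Z)|$ attains a maximum at some $Z_0\in\H_n$, and for a similitude $L$ of factor $p$ one has $\det\im(L\langle Z\rangle)=p^{n}\,|\det(CZ+D)|^{-2}\det Y$, which gives the \emph{uniform} bound $|f\underset{k}{\mid}L(Z_0)|\le p^{-kn/2}(\det Y_0)^{-k/2}\max$ for every coset representative simultaneously; summing over the cosets and dividing by $|f(Z_0)|$ yields the claim (this is Freitag's Hilfssatz IV.4.8). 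Note also that the coset count $\prod_{j=1}^{n}(p^{2j-1}+1)$ is not simply read off Corollary~\ref{Corollary_5}: the paper obtains it by identifying $\sharp\bigl(\Gamma_n[q]\backslash\Gamma_n[q]M\Gamma_n[q]\bigr)$ with $\sharp\bigl(\Gamma_n\backslash\Tcal_n(p)\bigr)$ via \eqref{gl_1}/\eqref{gl_2} (this is where $p\equiv 1\ (\bmod\ q)$ enters, as you correctly note) and then evaluating the latter as the $k=0$ eigenvalue in Lemma~\ref{Lemma_9}. Your proposal names all the right ingredients but assembles them around an inequality that fails; rebuilt around the maximum of $(\det Y)^{k/2}|f|$, it becomes the paper's proof.
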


\begin{proof}
There exists $Z_0\in \H_n$ such that the function
\[
\H_n\to \R, \;\; Z\mapsto (\det Y)^{k/2} |f(Z)|,
\]
attains its maximum at $Z_0$ due to \cite{B4}. Then the result follows in the same way as in \cite{F}, Hilfssatz IV.4.8, because of 
\[
 \sharp\bigl(\Gamma_n[q]\backslash \Gamma_n[q] M \Gamma_n[q]\bigr) = \sharp\bigl(\Gamma_n\backslash \Tcal_n(p)\bigr) = \prod^n_{j=1} (p^{2j-1} + 1)
\]
due to \eqref{gl_2} as well as the case $k=0$ in Lemma \ref{Lemma_9}.
\end{proof}

Next we need an assertion on iterative $\phi$-operators.

\begin{lemma}\label{Lemma_11} 
Let $f\in \Mcal(\Gamma_n,k)$, $R_j\in U(j,j;\K)$, $j=1,\ldots,n$. Then
\[
 f \underset{k}{\mid} R_n \mid \phi \underset{k}{\mid} R_{n-1}\mid \phi \ldots \;\underset{k}{\mid} R_1 \mid \phi =c \lim_{y\to \infty} f(iyI) = c\cdot \alpha_f(0)
\]
for some $c\neq 0$.
\end{lemma}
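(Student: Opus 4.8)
The plan is to induct on $n$, peeling off the last variable with a single block ``$\underset{k}{\mid}R_n$, then $\mid\phi$'' and feeding the result into the degree-$(n-1)$ case. The quantity carried through the induction is the honest degree-$(n-1)$ form $f\mid\phi\in\Mcal(\Gamma_{n-1},k)$ together with a modified unitary matrix; the constant $c$ will appear as a product of nonzero determinant factors, one per block. For $n=1$ the claim is immediate: $\Gamma_1$ has a single cusp, so $R_1\in U(1,1;\K)$ may be replaced (using $f\underset{k}{\mid}\gamma=f$) by $R_u=\left(\begin{smallmatrix}\overline u&0\\0&u^{-1}\end{smallmatrix}\right)$, and $f\underset{k}{\mid}R_u\mid\phi=u^k\alpha_f(0)$.

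First I would analyse one block. Writing $R_n=\left(\begin{smallmatrix}A&B\\ C&D\end{smallmatrix}\right)$ and invoking $f\underset{k}{\mid}\gamma=f$ for $\gamma\in\Gamma_n$, I would left-multiply $R_n$ by a suitable $\gamma$ so that it stabilises the standard cusp, i.e.\ has $C=0$, and then (absorbing a $GL_n(\oh_\K)$-factor, again in $\Gamma_n$) bring it to the form $R_U=\left(\begin{smallmatrix}\overline U^{tr}&0\\0&U^{-1}\end{smallmatrix}\right)$ of Theorem~\ref{Theorem_3} with $U\in GL_n(\K)$ block-upper-triangular relative to the split $n=(n-1)+1$. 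For such a matrix the slash is linear, $f\underset{k}{\mid}R_U(Z)=(\det U)^k f(\overline U^{tr}ZU)$.

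Next I would apply $\phi$. Putting $Z=\left(\begin{smallmatrix}Z_1&0\\0&iy\end{smallmatrix}\right)$, a computation of $\overline U^{tr}ZU$ shows that its bottom-right entry is $\ast+|u|^2\,iy$ while the top-left block is $\overline U_1^{tr}Z_1U_1$; as $y\to\infty$ the factor $e^{-2\pi|u|^2 y\,t_{nn}}$ kills every Fourier term with $t_{nn}>0$, so the off-diagonal shifts drop out (they meet only the vanishing last row/column of such $T$) and one is left with
\[
 f\underset{k}{\mid}R_U\mid\phi=u^k\,\bigl(f\mid\phi\bigr)\underset{k}{\mid}R_{U_1},
\]
where $u$ is the last diagonal entry of $U$ and $U_1\in GL_{n-1}(\K)$ its upper-left block; the underlying fact is $f(iYP)\to\alpha_f(0)$ for every positive definite $P$, since $\trace(TP)>0$ for $0\neq T\geq0$. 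As $f\mid\phi\in\Mcal(\Gamma_{n-1},k)$ is a genuine Hermitian modular form, merging $R_{U_1}$ with the next matrix into $R_{n-1}':=R_{U_1}R_{n-1}\in U(n-1,n-1;\K)$ reduces the whole chain to the degree-$(n-1)$ instance for $f\mid\phi$ with data $R_{n-1}',R_{n-2},\dots,R_1$. The inductive hypothesis yields $c'\,\alpha_{f\mid\phi}(0)=c'\,\alpha_f(0)$, and $c=u^kc'\neq0$.

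The hard part is the very first reduction of a general $R_n\in U(n,n;\K)$ to the cusp-fixing triangular form $R_U$. Left-multiplication by $\Gamma_n$ moves $R_n$ to the standard cusp only within its $\Gamma_n$-orbit, and these orbits are indexed by the ideal classes of $\K$—exactly the $h_\K$-fold ambiguity that forced the several representatives $R_{U_j}$ in Theorem~\ref{Theorem_3}, and that must also be dealt with when column-clearing the last row of $U$ over the non-principal ring $\oh_\K$. I expect to resolve this through an Iwasawa-type decomposition of $U(n,n;\K)$ relative to the Siegel parabolic combined with a Hermite-normal-form reduction of $U$; the only point to verify is that every factor so extracted is a nonzero power of a determinant, so that $\alpha_f(0)$ is reproduced and $c\neq0$. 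Granting this reduction, the induction closes.
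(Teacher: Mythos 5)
Your overall strategy (peel off one $\phi$ at a time by induction) is not the paper's, and it founders on exactly the point you yourself flag as ``the hard part''. The paper instead uses the commutation rule $f\underset{k}{\mid}R_n\mid\phi\underset{k}{\mid}R_{n-1}=f\underset{k}{\mid}R_n(R_{n-1}\times I)\mid\phi$ to collect all the unitary matrices into a single $R=R_n\cdot(R_{n-1}\times I)\cdots(R_1\times I)\in U(n,n;\K)$, reducing the whole chain to $f\underset{k}{\mid}R\mid\phi^n$. After that, only the reduction to $C=0$ is needed (Lemma \ref{Lemma_2}, applied after clearing denominators), and formula \eqref{gl_4} shows that $\phi^n$ --- i.e.\ $Z=iyI$, $y\to\infty$ --- kills every Fourier term with $T\neq 0$, leaving $(\det D)^{-k}\alpha_f(0)$; the translation block $B$ is harmless there because the phase $e^{2\pi i\trace(TBD^{-1})}$ is evaluated only at $T=0$.

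Your step-by-step version needs strictly more: you must bring each $R_n$, by left multiplication with $\Gamma_n$, to the form $R_U$ with $U$ block-upper-triangular and with no translation part, and neither reduction is available in general. First, an upper block triangular element of $U(n,n;\K)$ has the form $\left(\begin{smallmatrix}\overline{D}^{tr-1}&B\\0&D\end{smallmatrix}\right)$ with $\overline{D}^{tr}B$ Hermitian but merely rational; left multiplication by $\Gamma_n$ changes $B$ only modulo integral translations, so $B$ cannot be removed and it twists the intermediate Fourier coefficients by a nontrivial character --- this already invalidates the clean identity $f\underset{k}{\mid}R_U\mid\phi=u^k\,(f\mid\phi)\underset{k}{\mid}R_{U_1}$ as you state it (though that part could be absorbed into $R_{n-1}'$). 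Second, and more seriously, for $h_\K>1$ the last row of $U$ generates a possibly non-principal fractional ideal, so no $V\in GL_n(\oh_\K)$ clears its first $n-1$ entries; the Fourier terms surviving in $f\underset{k}{\mid}R_n\mid\phi$ are then those with $Tg=0$ for a vector $g$ not proportional to $e_n$, and the result is not $(f\mid\phi)$ slashed by an element of $U(n-1,n-1;\K)$ with $f\mid\phi\in\Mcal(\Gamma_{n-1},k)$, but rather a ``$\phi$ at another cusp'', a form for a smaller congruence group --- precisely the phenomenon behind Theorem \ref{Theorem_3} and the appearance of $\Gamma_{n-j+1}[N^{2j-2}]$ in Theorem \ref{Theorem_4}. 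Hence the induction does not close with the hypothesis you carry, and the reduction you defer to an ``Iwasawa-type decomposition'' is not a technicality but the genuine obstruction that the paper's single-$\phi^n$ argument is designed to avoid.
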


\begin{proof}
As $f\underset{k}{\mid} R_n \mid \phi \underset{k}{\mid} R_{n-1} = f \underset{k}{\mid} R_n(R_{n-1}\times I)\mid \phi$ (cf. \cite{K3}) we get
\[
 f \underset{k}{\mid} R_n \mid \phi \ldots \; \underset{k}{\mid} R_1 \mid \phi = f \underset{k}{\mid} R \mid \phi^n,
\]
where 
\[
 R = R_n \cdot (R_{n-1}\times I) \cdot \ldots \cdot (R_1\times I) \in U(n,n;\K).
\]
Now use Lemma \ref{Lemma_2} and \eqref{gl_4}.
\end{proof}

We give an application to the characterization of cusp forms. Therefore we use the special matrices $R_{U_\ell^{(n)}}$ from Theorem \ref{Theorem_3}.

\begin{lemma}\label{Lemma_12} 
Let $f \in\Mcal(\Gamma_n,k)$, $1\leq j\leq n$. Then 
\[
f \underset{k}{\mid} R \mid \phi^j \equiv 0 \;\; \text{for all}\;\; R\in U(n,n;\K)
\]
holds if and only if this is true for
\begin{gather*}\tag{5}\label{gl_5}
 R =  R_{U_{i_n}^{(n)}} R_{V_n} \cdot \left( R_{U_{i_{n-1}}^{(n-1)}} R_{V_{n-1}}\times I\right) \cdot \ldots \cdot \left(R_{U_{i_{n-j+1}}^{(n-j+1)}} R_{V_{n-j+1}} \times I\right)\in U(n,n;\K)
\end{gather*}
for all $V_\ell \in GL_\ell (\oh_\K)$ and $i_\ell \in \{1,\ldots,h_\K\},$ $\ell = n,\ldots,n-j+1$.
\end{lemma}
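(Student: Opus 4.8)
The plan is to handle the trivial direction directly and to reduce the substantial ("if") direction, via the collapsing identity behind Lemma~\ref{Lemma_11}, to a single reduction step that is exactly the mechanism already used in the proof of Theorem~\ref{Theorem_3}, iterated once per factor of $\phi$.

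The "only if" direction is immediate: each factor $R_{U_{i_\ell}^{(\ell)}}R_{V_\ell}$ lies in $U(\ell,\ell;\K)$, so the product \eqref{gl_5} lies in $U(n,n;\K)$ and is merely a special choice of $R$. For the reduction underlying the converse, I would first iterate the identity $f\underset{k}{\mid}R_n\mid\phi\underset{k}{\mid}R_{n-1}=f\underset{k}{\mid}R_n(R_{n-1}\times I)\mid\phi$ from the proof of Lemma~\ref{Lemma_11} to obtain $f\underset{k}{\mid}R_n\mid\phi\underset{k}{\mid}R_{n-1}\mid\phi\cdots\underset{k}{\mid}R_{n-j+1}\mid\phi=f\underset{k}{\mid}R\mid\phi^j$ with $R=R_n(R_{n-1}\times I)\cdots(R_{n-j+1}\times I)$, $R_\ell\in U(\ell,\ell;\K)$. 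Since $R_n$ alone already exhausts $U(n,n;\K)$ (take $R_{n-1}=\cdots=I$), the condition "$f\underset{k}{\mid}R\mid\phi^j\equiv0$ for all $R\in U(n,n;\K)$" is equivalent to the chained condition "$f\underset{k}{\mid}R_n\mid\phi\cdots\underset{k}{\mid}R_{n-j+1}\mid\phi\equiv0$ for all $R_\ell\in U(\ell,\ell;\K)$". The special family \eqref{gl_5} is precisely this chained expression with each $R_\ell$ restricted to $\mathcal{F}_\ell:=\{R_{U_i^{(\ell)}}R_V: 1\le i\le h_\K,\ V\in GL_\ell(\oh_\K)\}$, and $R_{U_i^{(\ell)}}R_V$ runs through the full $GL_\ell(\oh_\K)$-orbit of $U_i^{(\ell)}$. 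So the lemma amounts to: chained vanishing over all $R_\ell\in\mathcal{F}_\ell$ forces it over all $R_\ell\in U(\ell,\ell;\K)$.

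I would prove this by induction on $j$ (allowing the degree to vary), the base case $j=1$ being Theorem~\ref{Theorem_3} for degrees $\ge 2$ and the direct computation \eqref{gl_4} for degree $1$; the extra factors $R_V$ with $V\in GL_\ell(\oh_\K)\subseteq\Gamma_n$ only add conditions that the cusp-form characterization already supplies. For the inductive step, fix an arbitrary $R_n\in U(n,n;\K)$. Using $f\underset{k}{\mid}\gamma=f$ for $\gamma\in\Gamma_n$ together with the null-vector normalization from the proof of Theorem~\ref{Theorem_3} — any primitive isotropic direction is carried by a suitable $V_n\in GL_n(\oh_\K)$ and ideal-class representative $u_{i_n}$ into standard position — I would write $R_n=\gamma\,R_{U_{i_n}^{(n)}}R_{V_n}\,P$ with $\gamma\in\Gamma_n$ and $P$ in the parabolic stabilizing the $\phi$-cusp. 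Then applying $\phi$ absorbs $P$ into the next slash: $f\underset{k}{\mid}R_n\mid\phi\underset{k}{\mid}R_{n-1}=c\,(g_{i_n})\underset{k}{\mid}(\pi(P)R_{n-1})$ with $c\neq0$, $\pi(P)\in U(n-1,n-1;\K)$, where $g_{i_n}:=f\underset{k}{\mid}R_{U_{i_n}^{(n)}}R_{V_n}\mid\phi$ is a Hermitian modular form of degree $n-1$. The hypothesis gives the chained vanishing of $g_{i_n}$ over the lower factors in $\mathcal{F}_\ell$, so the case $(n-1,j-1)$ applied to $g_{i_n}$ yields vanishing over all $R_{n-1}\in U(n-1,n-1;\K)$ and all remaining free factors; since $\pi(P)R_{n-1}$ again ranges over the full group, I conclude $f\underset{k}{\mid}R_n\mid\phi^j\equiv0$.

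The hard part is the level-$n$ decomposition just invoked: one must establish rigorously that an arbitrary $R_n\in U(n,n;\K)$ factors through $\Gamma_n$ on the left, a member of $\mathcal{F}_n$ in the middle, and the $\phi$-stabilizing parabolic on the right — the flag-theoretic form of the null-vector argument of Theorem~\ref{Theorem_3}, resting on the fact that the $GL_n(\oh_\K)$-orbits of primitive isotropic vectors are indexed by the ideal classes — and that the boundary form $g_{i_n}$ is genuinely a degree-$(n-1)$ Hermitian modular form, so that the induction hypothesis is applicable. The absorption of $P$ with a nonzero automorphy constant and the bookkeeping that $\pi(P)R_{n-1}$ still exhausts $U(n-1,n-1;\K)$ are routine from the standard behaviour of the Siegel $\phi$-operator (cf. \cite{K3}, \cite{B4}); the genuine content is this per-level normalization of the cusp.
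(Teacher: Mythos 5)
Your overall strategy coincides with the paper's: the published proof of this lemma is the single sentence ``Apply the same arguments as in the proof of Theorem~\ref{Theorem_3} and Lemma~\ref{Lemma_11},'' i.e.\ collapse the iterated $\phi$'s via $f \underset{k}{\mid} R_n \mid \phi \underset{k}{\mid} R_{n-1} = f \underset{k}{\mid} R_n(R_{n-1}\times I)\mid\phi$ and then run the null-vector/ideal-class normalization of Theorem~\ref{Theorem_3} once per level. Your chained reformulation and the per-level reduction are exactly that intended argument, written out.

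There is, however, one soft spot in your induction. You apply ``the case $(n-1,j-1)$'' to $g_{i_n} = f\underset{k}{\mid} R_{U_{i_n}^{(n)}}R_{V_n}\mid\phi$, but $g_{i_n}$ is not an element of $\Mcal(\Gamma_{n-1},k)$: since $U_{i_n}$ has only rational (non-integral) entries, $f\underset{k}{\mid} R_{U_{i_n}^{(n)}}R_{V_n}$ is modular merely for a congruence subgroup of $\Gamma_n$ (compare the proof of Theorem~\ref{Theorem_4}, where $(f-E_k^{(n)})\underset{k}{\mid} R\mid\phi^{j-1}$ lands in $\Mcal(\Gamma_{n-j+1}[N^{2j-2}],k)$, not in $\Mcal(\Gamma_{n-j+1},k)$). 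So the lemma at degree $n-1$ is not literally applicable to $g_{i_n}$, and the induction must instead be run on your chained statement formulated for forms on a principal congruence subgroup. This also corrects your aside that the factors $R_{V_\ell}$ ``only add conditions that the cusp-form characterization already supplies'': at the top level they are indeed absorbed by $\Gamma_n$-invariance, but at the lower levels the form is no longer invariant under $R_V$ for general $V\in GL_\ell(\oh_\K)$, and these factors are precisely what replaces the missing invariance when one normalizes a null vector into the position $(0,\ldots,0,u_i,1)^{tr}\lambda$ as in Theorem~\ref{Theorem_3} --- that is the reason they appear in \eqref{gl_5} at all. With the induction hypothesis restated at congruence level (the $V_\ell$ supplied explicitly rather than borrowed from modularity), your argument goes through and matches the paper's intent.
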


\begin{proof}
Apply the same arguments as in the proof of Theorem \ref{Theorem_3} and Lemma \ref{Lemma_11}.
\end{proof}

\begin{remark} 
 If $f\in\Mcal(\Gamma_n,k)$ is symmetric, i.e. $f(Z^{tr}) = f(Z)$, and $M\in\Delta_n$ with $\det M\in\R_+$, we observe
 \[
  f\underset{k}{\mid} \Gamma_n M\Gamma_n (Z^ {tr}) = f\underset{k}{\mid} \Gamma_n \overline{M} \Gamma_n (Z).
 \]
We conclude $\Gamma_n \overline{M} \Gamma_n = \Gamma_n M \Gamma_n$ for $M\in\Delta_n^{\text{inert}}$ from Theorem \ref{Theorem_2}. Thus these Hecke operators map the subspace of symmetric Hermitian modular forms on itself.
\end{remark}

\section{The Siegel-Eisenstein series}

According to \cite{B} we may define the Siegel-Eisenstein series
\[
E_k^{(n)} (Z) = \sum_{M:\Gamma_{n,0} \backslash \Gamma_n} 1 \underset{k}{\mid} M(Z),\;\, Z\in\H_n,
\]
for even $k>2n$, $d_\K\neq -3,-4$, where
\[
\Gamma_{n,0} = \left\{\begin{pmatrix}
											A & B \\ 0 & D
											\end{pmatrix}
\in \Gamma_n\right\}.
\]
We have 
\[
E_k^{(n)} \mid \phi = E_k^{(n-1)}, \;\; E_k^{(0)} : = 1.
\]
We can take the same proof as in \cite{F}, IV.4.7, in order to get

\begin{lemma}\label{Lemma_13} 
Let $k>2n$ be even, $d_\K\neq -3,-4$, $M\in\Delta_n$. Then there exists a $\lambda\in\C$ such that
\[
E_k^{(n)} \underset{k}{\mid} \Gamma_n M \Gamma_n = \lambda E_k^{(n)}.
\]
\end{lemma}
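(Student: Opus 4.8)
The plan is to argue by induction on $n$, transporting the proof of \cite{F}, IV.4.7, to the Hermitian situation with $h_\K>1$. For $n=1$ the claim is the classical statement that the elliptic Eisenstein series of weight $k$ is a Hecke eigenform for $SL_2(\Z)$, while $E_k^{(0)}=1$ is trivially invariant; so fix $n\geq 2$ and assume the assertion in degree $n-1$. Using the multiplicativity of Corollary \ref{Corollary_3} it is enough to treat a double coset $\Gamma_n M\Gamma_n$ with prime-power similitude factor; in any case $g:=E_k^{(n)}\underset{k}{\mid}\Gamma_n M\Gamma_n$ lies in $\Mcal(\Gamma_n,k)$ by \eqref{gl_3}.

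First I would apply the Siegel $\phi$-operator to $g$. Since $\phi$ intertwines the Hecke actions through the homomorphism $\phi_k$ (cf. \cite{F}, \cite{K2}) and $E_k^{(n)}\mid\phi = E_k^{(n-1)}$, one obtains
\[
 g\mid\phi = E_k^{(n-1)}\underset{k}{\mid}\phi_k(\Gamma_n M\Gamma_n).
\]
By the inductive hypothesis $E_k^{(n-1)}$ is an eigenform for every Hecke operator of degree $n-1$, hence for $\phi_k(\Gamma_n M\Gamma_n)$, so $g\mid\phi = \lambda E_k^{(n-1)}$ for a scalar $\lambda$, the prospective eigenvalue. The same reasoning must be run at each ideal-class cusp: using the twisted matrices $R_{U_j}$ of Theorem \ref{Theorem_3} together with $E_k^{(n)}\underset{k}{\mid}R_{U_j}\mid\phi = E_k^{(n-1)}$ (the complete Siegel-Eisenstein series restricts to the lower one at every cusp) and the compatibility of the twisted $\phi$-operators with the Hecke action, one shows $g\underset{k}{\mid}R_{U_j}\mid\phi = \lambda E_k^{(n-1)}$ with one and the same $\lambda$. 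Hence $f_0 := g-\lambda E_k^{(n)}$ satisfies $f_0\underset{k}{\mid}R_{U_j}\mid\phi\equiv 0$ for $j=1,\ldots,h_\K$, so $f_0$ is a cusp form by Theorem \ref{Theorem_3}.

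It then remains to see that $f_0=0$. Here I would invoke the Petersson scalar product: the Siegel-Eisenstein series is orthogonal to $\Ccal(\Gamma_n,k)$, and the Hecke operators are self-adjoint, which rests on the same transpose symmetry $\Gamma_n M\Gamma_n = \Gamma_n M^{tr}\Gamma_n$ (Corollary \ref{Corollary_1}) that produced commutativity. Consequently $g=E_k^{(n)}\underset{k}{\mid}\Gamma_n M\Gamma_n$ is again orthogonal to all cusp forms, whence so is $f_0$; being simultaneously a cusp form, $f_0$ must vanish, and $E_k^{(n)}\underset{k}{\mid}\Gamma_n M\Gamma_n = \lambda E_k^{(n)}$ follows.

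The step I expect to be the main obstacle is the passage through the cusps dictated by $h_\K>1$. In the class-number-one case of \cite{F} a single $\phi$-image detects cusp forms, whereas here $g$ must be controlled simultaneously at all $h_\K$ ideal-class cusps, and one must verify that a single scalar $\lambda$ governs each of them. Proving $E_k^{(n)}\underset{k}{\mid}R_{U_j}\mid\phi = E_k^{(n-1)}$ and the commutation of the twisted $\phi$-operators with the Hecke operators — so that the cusp-form criterion of Theorem \ref{Theorem_3} becomes applicable to $f_0$ — is the delicate point; once that is in place, the self-adjointness argument closing the proof is routine.
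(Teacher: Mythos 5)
Your argument is a genuinely different route from the paper's, which simply runs Freitag's unfolding computation (IV.4.7): one writes $E_k^{(n)} \underset{k}{\mid} \Gamma_n M \Gamma_n = \sum_{K:\Gamma_{n,0}\backslash \Gamma_n M \Gamma_n} 1\underset{k}{\mid} K$, decomposes $\Gamma_n M \Gamma_n$ into finitely many double cosets $\Gamma_{n,0} K_i \Gamma_n$ with block upper triangular representatives $K_i=\left(\begin{smallmatrix} A_i & B_i \\ 0 & D_i\end{smallmatrix}\right)$ (available by Lemma~\ref{Lemma_2}), and observes that each orbit sums to the constant $|\det D_i|^{-k}[\cdots]$ times $E_k^{(n)}$ itself, since $1\underset{k}{\mid}K_i$ is constant and $K_i^{-1}\Gamma_{n,0}K_i\cap\Gamma_n\subseteq\Gamma_{n,0}$. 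That computation is insensitive to the class number and needs neither the $\phi$-operator nor the Petersson product.

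Your proof, by contrast, has a genuine gap at exactly the point you flag, and it is not a formality. First, the intertwining $g\mid\phi = E_k^{(n-1)}\underset{k}{\mid}\phi_k(\Gamma_n M\Gamma_n)$ is not available for arbitrary $M\in\Delta_n$: the paper defines $\phi_k$ only on $\Delta_n^{\text{inert}}$, because the required normal form of a right coset representative (last column of $A$ equal to $(0,\ldots,0,\alpha)^{tr}$ and last row of $D$ equal to $(0,\ldots,0,\delta)$) need not exist when $h_\K>1$ and the relevant column generates a non-principal ideal; this failure of $\phi$--Hecke compatibility is one of the ``number theoretical complications'' the paper is organized around. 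Second, the identity $E_k^{(n)}\underset{k}{\mid}R_{U_j}\mid\phi = E_k^{(n-1)}$ at the non-principal cusps is asserted, not proved, and is not obvious for the series summed over $\Gamma_{n,0}\backslash\Gamma_n$. Third, even your closing step needs care: the Petersson adjoint of $\Gamma_n M\Gamma_n$ is (up to a positive constant) $\Gamma_n \overline{M}^{tr}\Gamma_n$, and Corollary~\ref{Corollary_1} only identifies $\Gamma_n M\Gamma_n$ with $\Gamma_n M^{tr}\Gamma_n$, not with $\Gamma_n\overline{M}\Gamma_n$ --- the latter identity is established in the paper only on the inert part (Remark after Lemma~\ref{Lemma_12}). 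What you actually need there is merely that the adjoint is again a Hecke operator preserving cusp forms, which does hold, so that step is repairable; but the reduction of $f_0$ to a cusp form via the $h_\K$ twisted $\phi$-operators is the load-bearing step and remains unproved. Given that the direct unfolding argument settles the lemma for all $M\in\Delta_n$ and all $h_\K$ in a few lines, that is the proof to give.
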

We obtain our final result and recall the definition of $N$ from Lemma \ref{Lemma_6}.

\begin{theorem}\label{Theorem_4} 
Let $k>2n$, $d_\K \neq -3,-4$. Let $p$ be an inert prime 
\[
p\equiv 1 \!\!\! \mod N^{2n-2}
\]
and $f\in \Mcal_k(\Gamma_n,k)$ satisfying
\[
\alpha_f(0) = 1 \;\; \text{and}\;\; f \underset{k}{\mid} \Tcal_n(p) = \lambda f
\]
for some $\lambda \in\C$. Then
\[
f= E_k^{(n)}.
\]
\end{theorem}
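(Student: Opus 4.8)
The plan is to compare $f$ with the Eisenstein series $E:=E_k^{(n)}$ through the single operator $\Tcal_n(p)$ and then force their difference to vanish. By Lemma~\ref{Lemma_13}, $E$ is a $\Tcal_n(p)$-eigenform; since iterating $E\mid\phi=E_k^{(n-1)}$ down to $E_k^{(0)}=1$ gives $\alpha_E(0)=1\neq 0$, Lemma~\ref{Lemma_9} pins its eigenvalue to $\lambda_0:=\prod_{i=1}^{n}(p^{2i-1-k}+1)$. Applying the same lemma to $f$ (using $\alpha_f(0)=1$) shows the given $\lambda$ equals $\lambda_0$ as well. Hence $g:=f-E\in\Mcal(\Gamma_n,k)$ satisfies $\alpha_g(0)=0$ and is a $\Tcal_n(p)$-eigenform with eigenvalue $\lambda_0$, and the theorem reduces to proving $g\equiv 0$.

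The core is to show that $g$ is a cusp form. By Theorem~\ref{Theorem_3}, $g$ is a cusp form as soon as $g\underset{k}{\mid}R\mid\phi\equiv 0$ for all $R\in U(n,n;\K)$, and I would prove the whole family $g\underset{k}{\mid}R\mid\phi^{\,s}\equiv 0$ for all such $R$ and all $s=1,\dots,n$ by descending induction on $s$, using Lemma~\ref{Lemma_12} to reduce each $s$ to the special matrices~\eqref{gl_5}. For $s=n$ this is immediate from Lemma~\ref{Lemma_11}, which gives $g\underset{k}{\mid}R\mid\phi^{\,n}=c\cdot\alpha_g(0)=0$.

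For the step $s\to s-1$, fix a special matrix $R$ and set $h:=g\underset{k}{\mid}R\mid\phi^{\,s-1}$, a modular form of degree $n-s+1$ on a congruence subgroup $\Gamma_{n-s+1}[r]$ with $r\mid N^{2(s-1)}$. First, $h$ is a cusp form: each of its boundary restrictions $h\underset{k}{\mid}R'\mid\phi$ is, after commuting $R'$ through the $\phi$'s as in Lemma~\ref{Lemma_11}, an instance of $g\underset{k}{\mid}\tilde R\mid\phi^{\,s}$, which vanishes by the induction hypothesis together with Lemma~\ref{Lemma_12}. Second, the special matrices are block diagonal and therefore commute with $\left(\begin{smallmatrix}I&0\\0&pI\end{smallmatrix}\right)$; conjugating the eigenvalue identity for $g$ and invoking \eqref{gl_1} shows $g\underset{k}{\mid}R$ to be an eigenform of the congruence operator $\Tcal_n^{\,r}(p)$ with eigenvalue $\lambda_0$, and Corollary~\ref{Corollary_7} then gives that $h$ is an eigenform of $\Tcal_{n-s+1}^{\,r}(p)$ with eigenvalue $\prod_{i=1}^{n-s+1}(p^{2i-1-k}+1)>1$. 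Since $p\equiv 1\pmod r$ (guaranteed by $p\equiv 1\pmod{N^{2n-2}}$), Lemma~\ref{Lemma_10} bounds this eigenvalue by $p^{-k(n-s+1)/2}\prod_{i=1}^{n-s+1}(p^{2i-1}+1)$, which is strictly less than $1$ once one uses that $k$ is even and $k>2(n-s+1)$. This contradiction forces $h=0$, so $g\underset{k}{\mid}R\mid\phi^{\,s-1}\equiv 0$.

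At $s=1$ this yields $g\underset{k}{\mid}R\mid\phi\equiv 0$ for all $R$, so $g$ is a cusp form by Theorem~\ref{Theorem_3}. Applying the estimate once more at degree $n$ and level $1$ (Lemma~\ref{Lemma_10}), the eigenvalue $\lambda_0>1$ again exceeds the cusp bound $<1$, whence $g\equiv 0$ and $f=E_k^{(n)}$. The main obstacle is the level bookkeeping in the inductive step: the deepest forms $h$ have degree $1$ and arise after $n-1$ conjugations by the matrices~\eqref{gl_5}, each multiplying the level by $N^2$, so the level reaches $N^{2(n-1)}=N^{2n-2}$; this is precisely why the hypothesis reads $p\equiv 1\pmod{N^{2n-2}}$, and the delicate point is to verify that this single congruence keeps \eqref{gl_1} and Corollary~\ref{Corollary_7} valid at every intermediate degree and level, so that the eigenvalues survive both the conjugations and the $\phi$-operators.
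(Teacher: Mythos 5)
Your proposal is correct and follows essentially the same route as the paper: both reduce to $g=f-E_k^{(n)}$ being a $\Tcal_n(p)$-eigenform with eigenvalue $\prod_{j}(p^{2j-1-k}+1)>1$ and kill it level by level through the $\phi$-operator hierarchy of Lemmas \ref{Lemma_11} and \ref{Lemma_12}, using the congruence operators $\Tcal^{r}(p)$ via \eqref{gl_1}, \eqref{gl_2} and the cusp-form eigenvalue bound of Lemma \ref{Lemma_10}. The paper organizes the descent as a minimal-$j$ contradiction rather than your descending induction, but the content is identical, and your index and level bookkeeping is in fact slightly more careful than the paper's.
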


\begin{proof}
The case $n=1$ is clear from the classical theory as $E_k^{(1)}$ coincides with the normalized elliptic Eisenstein series. Let $n\geq 2$. Since the constant term of the Fourier expansion is non-zero, we can apply Lemma \ref{Lemma_9}. If $f\neq E_k^{(n)}$, there exists a minimal $j$, $1\leq j\leq n$ such that
\[
(f-E_k^{(n)}) \underset{k}{\mid} R \mid \phi^j \equiv 0 \;\; \text{for all}\;\; R\in U(n,n;\K).
\]
This means that the non-zero Fourier coefficients have rank $>n-j$. Now apply Lemma \ref{Lemma_12} and assume
\[
\widetilde{f} : = (f-E_k^{(n)}) \underset{k}{\mid} R\mid \phi^{j-1} \not\equiv 0.
\]
for an $R\in U(n,n;\K)$ of the form \eqref{gl_5} quoted there. Thus $\widetilde{f} \in \Mcal(\Gamma_{n-j+1} [N^{2j-2}],k)$ is a cusp form. We conclude from Lemma \ref{Lemma_9}
\begin{align*}
& \widetilde{f} \underset{k}{\mid} \Tcal_{n-j+1}^{N^{2j-2}} (p) = \lambda \widetilde{f}, \\
& \lambda = \prod\limits_{\ell=1}^{n-j-1} (p^{2\ell -1-k} +1) > 1.
\end{align*}
But $\widetilde{f}$ is a cusp form. Therefore we can apply Lemma \ref{Lemma_10} in order to get 
\begin{align*}
|\lambda| & \leq p^{-k(n-j+1)/2} \prod\limits_{\ell=1}^{n-j+1} (p^{2\ell-1} +1) < p^{-k(n-j+1)/2} \prod\limits_{\ell=1}^{n-j+1} p^{2\ell}  \\
& = p^{(n-j+1)(n-j+2-k/2)} \leq 1
\end{align*}
in view of $k>2n$. This contradicts $\lambda>1$ and yields the claim.
\end{proof}

\begin{remark} 
a) The cases $d_\K = -3,-4$ are excluded because of the additional units. As $h_\K=1$ in these cases, the results are contained in \cite{K4}, where the proof is only valid for class number $1$. Due to our proof here the results in \cite{NN} are also valid for arbitrary $\K$. Moreover these considerations fill the gap in \cite{K4} such that the results of section 8 there are true for arbitrary $h_\K$. 
\\[1ex]
b) If $d_\K=-3,-4$ one has to impose the condition that $k$ is divisible by the number of units in $\oh_\K$. Alternatively for arbitrary even $k$ one has to restrict the summation to $\Gamma_n\cap SL_{2n}(\oh_\K)$ or to insert the factor $(\det M)^{-k/2}$ in the definition of $E_k^{(n)}$.
\end{remark}


\bibliographystyle{plain}
\renewcommand{\refname}{Bibliography}
\bibliography{bibliography_krieg} 

\end{document}